\newcommand\rmgamma{{\mathrm \Gamma}}
\title{Fully Analyzing an Algebraic P\'olya Urn Model}
\author{Basile Morcrette}
\institute
{
ALGORITHMS Project, INRIA Paris-Rocquencourt, 78153 Le Chesnay (France) \\ LIP6 , Universit\'e Paris 6, 4 place Jussieu, 75005 Paris (France) \\
\email{Basile.Morcrette@inria.fr}}
\begin{document}
\lhead{Fully Analyzing an Algebraic P\'olya Urn Model}
\rhead{B. Morcrette}

\pagestyle{fancy}

\maketitle

\begin{abstract}
This paper introduces and analyzes a particular class of P\'olya urns: balls are of two colors, can only be added (the urns are said to be \emph{additive}) and at every step the same constant number of balls is added, thus only the color compositions varies (the urns are said to be \emph{balanced}). These properties make this class of urns ideally suited for analysis from an ``analytic combinatorics'' point-of-view, following in the footsteps of Flajolet et al. \cite{FlaDuPuy06}. Through an \emph{algebraic} generating function to which we apply a multiple coalescing saddle-point method, we are able to give precise asymptotic results for the probability distribution of the composition of the urn, as well as local limit law and large deviation bounds.
\keywords{analytic combinatorics, P\'olya urn models, multiple coalescing saddle-point method, Gaussian local limit law, large deviations.}
\end{abstract}
\begin{flushright}
{\it Dedicated to the memory of Philippe Flajolet.}
\end{flushright}


\section{Introduction}

A P\'olya urn is an urn which contains balls of two colors (black and white), and which is coupled with an initial configuration and a set of evolution rules. A step then consists in randomly picking a ball from the urn, placing it back, and depending on its color, adding a fixed number of black and/or white balls.
The question is: what does the urn look like after a large number of steps? This simple process has turned out to be extremely versatile, and has been used to model many different phenomena, such as population growth, epidemics, tree structures in computer science (BST, $(a,b)$-trees), electoral campaigns, etc.

This paper analyzes a class of balanced additive urns. \emph{Balanced} urns are urns for which, at every step, the same constant number of balls is added. This property allows us to resort to a combinatorial treatment: enumerating all configurations using generating functions. Such an approach, introduced by Flajolet and his coauthors \cite{FlaDuPuy06}, is a departure from previous probabilistic methods. In \emph{additive} urns, no ball is ever removed and the urn's size is strictly increasing. We specifically consider a class of balanced additive urns which has \emph{algebraic} generating functions.

Through the use of analytic combinatorics \cite{FlaSed09}, we obtain precise probability results, including a Gaussian limit law with rate of convergence, a local limit law and large deviation bounds. Our analysis makes use of a multiple coalescing saddle-point method, which is not classical. As previously thought, analytic combinatorial methods can provide a wealth of valuable information that seems to be new. In addition, our results seem to reinforce the idea of a general theory for additive balanced urns.

This paper comes as the continuation of Flajolet's work on urns \cite{FlaGaPe05} and \cite{FlaDuPuy06}; these two papers were on subtractive and triangular urn models. Of course, this topic has been thoroughly addressed through probabilistic means. We can cite introductory books on urns \cite{JoKo77} \cite{Mah08} and an article \cite{BagchiPal}; on the topic of limit distribution for urns, Janson's papers are a reference \cite{Jan04} \cite{Jan05} \cite{Jan08} as is Smythe's \cite{Smythe96}; and on the topic of large additive urns, \cite{ChauPouSah09}. From an analytic point of view, Hwang et al.'s paper \cite{HwKuPa07} considers ``\emph{diminishing}'' urn models. And in direct relation to the present paper, we have obtained similar results \cite{Morcrette10} on a class of additive balanced urns which is linked to some family of $k$-trees \cite{PanSei10}.

In Sect.~{\ref{urn_system}}, we lay the ground work by introducing P\'olya urn models as well as the main result of \cite{FlaDuPuy06} on the isomorphism between balanced urns and differential systems. Then, in Sect.~{\ref{note1}}, we introduce our urn class, which can be viewed as a population growth model. We establish that the generating function is algebraic and provide our first results on mean and variance. Section~{\ref{limit_th}} contains the main theorems of the paper regarding the limit distribution of our urn class. Finally we state the propositions and lemmas required by the proof of the main theorems, and give some extensive details on the part involving multiple coalescing saddle-points.


\section{Urns and Differential Systems}\label{urn_system}

In P\'olya's classical urn model, we have an urn containing balls of two different colors, black balls ({\tt b} type) and white balls ({\tt w}~type). This system evolves with regards to particular rules (at each step, add and/or discard black and/or white balls), and these rules are specified by a $2\times2$ matrix
\begin{equation} \label{urne22}
\begin{pmatrix}
a & b \\
c & d  \end{pmatrix} \ \ \ a,  d \in \mathbb{Z},\ \ \ \  b, c \in \mathbb{Z}_{\geqslant0} \, .
\end{equation}
We start with an initial configuration $(a_0, b_0)$. At step $0$, the urn contains $a_0$ black balls and $b_0$ white balls. The evolution between steps $n$ and $n+1$ is now described. We uniformly draw a ball from the urn, we look at its color and \emph{we put it back into the urn}. If the color is black, then we add $a$ black balls and $b$ white balls; if the color is white, we add $c$ black balls and $d$ white balls.

\begin{definition}\label{def_balance}
The urn (\ref{urne22}) is said to be \emph{balanced} if the sums of its rows are constant, that is if $a+b = c + d$. This parameter is called the \emph{balance} of the urn, and denoted by $\sigma$. 
\end{definition}
\begin{remark}
At each step, we add $\sigma$ balls in the urn. So, starting with $a_0 + b_0$ balls in a balanced urn, we know that the total number of balls after $n$ steps will be $a_0+b_0 + \sigma n$. This balanced condition is the key requirement for using Flajolet-Dumas-Puyhaubert differential systems. 
\end{remark}

\begin{definition}\label{def_additive}
An urn is \emph{additive} if all the coefficients in its rule matrix (\ref{urne22}) are strictly positive, i.e., $a, b, c, d > 0$.
\end{definition}
\begin{remark} No balls are ever removed from an additive urn.
In our study, we focus on \emph{balanced additive} urns.\end{remark}

The papers \cite{FlaGaPe05} and \cite{FlaDuPuy06} introduce a new analytical and combinatorial approach to the study of these balanced urn models. The crucial starting point is an isomorphism theorem between the rules of an urn and a differential system. We briefly recall these results.
\begin{definition}\label{def_histoire}
A \emph{history} of length $n$ is a sequence of $n$ steps, obtained by $n$ successive draws from the urn. 
The \emph{exponential generating function of histories} of the urn (\ref{urne22}) is
\begin{equation}\label{genfun}
H(x,y,z;a_0,b_0) = \sum_{n,i,j} H_n (a_0,b_0;i,j) \, x^i y^j  \frac{z^n}{n !}\, ,
\end{equation}
where $H_n(a_0, b_0; i, j)$ is the number of histories of length $n$ starting at the configuration $(a_0,b_0)$, and ending at a configuration $(i,j)$.
We will shorten this to $H(x,y,z)$ when there is no ambiguity on the initial configuration $(a_0,b_0)$.
\end{definition}

\begin{theorem}[Flajolet--Dumas--Puyhaubert]\label{theo_fond}
We associate to the urn (\ref{urne22}), with the balanced condition $a+b = c+d$, the following differential system (we denote $t$-differentiation by a point, $\dot{X}(t) = \frac{d}{dt} X(t)$):
\begin{center}
$\Bigg\{$
\begin{tabular}{cccc}
$\dot{X}$  & $=$ & $X^{a+1}\, Y^{b}$& \\
$\dot{Y}$  & $=$ & $X^{c}\, Y^{d +1}$& .
\end{tabular}
\end{center}
Let $x_0$ and $y_0$ be two complex variables such that $x_0 y_0 \neq 0$. Let $X(x_0,y_0,t)$ and $Y(x_0,y_0,t)$ be the solutions of the differential system with initial conditions $X(x_0, y_0,0)=x_0,\ Y(x_0,y_0,0)=y_0$. Then the generating function of histories is given by
\begin{equation}\label{}
H(x_0,y_0,z) = X(x_0,y_0,z)^{a_0} \, Y(x_0,y_0,z)^{b_0} \, .
\end{equation}
\end{theorem}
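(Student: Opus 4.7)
My plan is to establish a first-order linear PDE for the generating function $H(x,y,z)$ directly from the urn dynamics and then integrate it via the method of characteristics; the characteristics will turn out to be exactly the trajectories (in backward time) of the differential system in the statement.

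The first step is combinatorial. Conditioning on the last draw, any history of length $n+1$ ending at configuration $(I,J)$ is obtained either by drawing one of the $I-a$ black balls from the previous configuration $(I-a,J-b)$, or one of the $J-d$ white balls from $(I-c,J-d)$; this yields the recurrence
\begin{equation*}
H_{n+1}(a_0,b_0;I,J) = (I-a)\,H_n(a_0,b_0;I-a,J-b) + (J-d)\,H_n(a_0,b_0;I-c,J-d).
\end{equation*}
Multiplying by $x^I y^J z^n/n!$, summing over $I,J,n$, and re-indexing each sum (shifting $I$ by $a$ or $c$ and $J$ by $b$ or $d$) translates the recurrence into the linear first-order PDE
\begin{equation*}
\partial_z H \;=\; x^{a+1}y^b\,\partial_x H \;+\; x^c y^{d+1}\,\partial_y H,
\end{equation*}
with initial condition $H(x,y,0)=x^{a_0}y^{b_0}$ coming from the unique length-zero history.

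I would then apply the method of characteristics. The associated characteristic ODEs are $dz/ds = 1$, $dx/ds = -x^{a+1}y^b$, $dy/ds = -x^c y^{d+1}$, $dH/ds = 0$, so $H$ is constant along integral curves of the \emph{backward} flow of the system in the statement. Writing $(X(x_0,y_0,t),Y(x_0,y_0,t))$ for the forward flow of $\dot X = X^{a+1}Y^b,\ \dot Y = X^c Y^{d+1}$ with data $(x_0,y_0)$ at $t=0$, flowing the point $(x_0,y_0,z)$ backward along its characteristic for parameter $z$ is the same as flowing $(x_0,y_0)$ \emph{forward} for time $z$ under the original system; hence the characteristic through $(x_0,y_0,z)$ meets $\{z=0\}$ at the foot $\bigl(X(x_0,y_0,z),Y(x_0,y_0,z)\bigr)$. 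Evaluating the initial condition at that foot yields the claimed closed form
\begin{equation*}
H(x_0,y_0,z) \;=\; X(x_0,y_0,z)^{a_0}\,Y(x_0,y_0,z)^{b_0}.
\end{equation*}

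The only real subtlety is the sign book-keeping: the minus signs in the characteristic ODEs are exactly what make the \emph{forward} flow $(X,Y)$ (and not its inverse) reappear in the answer, so one has to be careful not to accidentally invert the system. One should also check that $\dot X = X^{a+1}Y^b$, $\dot Y = X^c Y^{d+1}$ admits a genuine analytic solution in a neighborhood of $(x_0,y_0,0)$ whenever $x_0 y_0\neq 0$, so that Picard--Lindelöf legitimizes identifying the formal power-series solution of the PDE (unique by comparing coefficients of $z^n$) with the analytic expression $X^{a_0}Y^{b_0}$.
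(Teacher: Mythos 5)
This theorem is not proved in the paper: it is recalled from Flajolet--Dumas--Puyhaubert \cite{FlaDuPuy06} (``We briefly recall these results''), so there is no internal proof to compare against. Your proof is correct, and it is mathematically the same argument as in the cited source, just phrased differently: \cite{FlaDuPuy06} introduce the derivation $\mathfrak{D} = x^{a+1}y^b\,\partial_x + x^c y^{d+1}\,\partial_y$, note from the same ``peel off the last draw'' recurrence that $H = \sum_n \frac{z^n}{n!}\,\mathfrak{D}^n\!\bigl[x^{a_0}y^{b_0}\bigr] = e^{z\mathfrak{D}}\!\bigl[x^{a_0}y^{b_0}\bigr]$, and then invoke the fact that exponentiating a derivation is substitution along its flow. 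Your PDE $\partial_z H = \mathfrak{D}H$ is precisely the infinitesimal form of that exponential, and the method of characteristics is how one computes the flow, so the two routes coincide; your sign book-keeping (backward characteristics reproducing the \emph{forward} flow $(X,Y)$) is handled correctly. One small remark worth making explicit: your argument never uses the balanced condition $a+b=c+d$, and indeed the isomorphism theorem in \cite{FlaDuPuy06} does not require it --- the hypothesis appears in the statement here only because the rest of the paper works under it. The balance \emph{is} what later justifies freezing $y_0=1$ (as done in the proof of Theorem~\ref{gf_algebric}) and, analytically, what controls the growth needed to pass from the formal to the convergent setting; your final Picard--Lindel\"of remark correctly identifies that the local existence of the analytic flow near $t=0$ with $x_0y_0\neq 0$ suffices to match the formal series solution with the analytic one.
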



\section{Preferential Growth Urns}\label{note1}

As previously mentioned, \cite{FlaGaPe05} has signaled that analytic methods can be used in the treatment of urn models. Since then, a general theory has been described for several urn models: for two-color balanced subtractive models and triangular models \cite{FlaDuPuy06}, as well as for some particular unbalanced models \cite{HwKuPa07}. The extension of these methods for all additive urn models is an open problem. This paper presents a full asymptotic study for some restriction of the balanced additive models. More specifically, here we present a two-parameter urn model, whereas the entire class of balanced additive models is described with three parameters. Indeed, knowing the four matrix coefficients and the balance hypothesis, one parameter is redundant. Our class is studied through its histories generating function, which is in fact \emph{algebraic} in our case.
\begin{definition}\label{our_class}
The class of \emph{preferential growth urns} denoted by $\mathcal{A}(\alpha, \beta)$ is defined by the two-color balanced matrix
\begin{equation}\label{urne_note1}
\begin{pmatrix}2 \alpha & \beta \\ \alpha & \alpha + \beta \end{pmatrix}, \mbox{ with } \alpha > 0, \,  \beta > 0   \ .
\end{equation}
\end{definition}

\begin{example} 
A first example $\mathcal{A}(1, 1)$ corresponds to the following evolution rules:
\begin{center}
$\begin{array}{ccc}
\texttt{b} & \rightarrow & \texttt{b} \, \texttt{b} \, \texttt{b} \, \texttt{w} \\
\texttt{w} & \rightarrow & \texttt{b} \, \texttt{w} \, \texttt{w} \, \texttt{w}
\end{array}$ \quad
corresponding to the urn \quad $\begin{pmatrix} 2 & 1 \\ 1 & 2 \end{pmatrix}$.
\end{center}
\noindent If we pick a {\tt{b}} ball, we replace it by three {\tt{b}} (counting the ball we drew and are now replacing) and one {\tt{w}}. If we pick a {\tt{w}} ball, we replace it by one {\tt{b}} and three {\tt{w}}.
In this particular example, we can see a model of population growth with two types of individuals ({\tt{b}} and {\tt{w}}). Every individual has three children and two of them are of the same type as their parent. Every individual encourages its own type with the ratio 2/1.
From this example, we choose to name our class \emph{preferential growth urns}.
\end{example}
\begin{remark}  Here are some characteristics of this urn class. The balance of $\mathcal{A}(\alpha, \beta)$ is $\sigma:= 2 \alpha + \beta$. The dissymetry index is defined by $p:=\alpha - 2\alpha = \beta - (\alpha + \beta) = -\alpha$.\\
The two eigenvalues of the matrix are $-p$ and $\sigma$. Besides, the ratio $\rho$ of these two eigenvalues is $\rho  :=  \frac{-p}{\sigma} = \frac{\alpha}{2\alpha + \beta} \leqslant \frac{1}{2}.$ It is a small urn so, from the probabilistic analyses of Smythe \cite{Smythe96} and Janson \cite{Jan04}, we already have a Gaussian behavior for the limiting distribution of the two colored balls in the urn.
\end{remark}
In this paper, thanks to analytic combinatorics, first we obtain concrete and precise asymptotic results with rate of convergence, local limit laws and probabilities of large deviations; second, the analytical proof deals with a multiple coalescing saddle-point method; and third, this is the first step towards a full study of additive balanced urns.
%
\subsection{An Algebraic Generating Function}\label{note1_gf}
We start the urn process $\mathcal{A}(\alpha,\beta)$ with no black balls $\texttt{b}$, and one white ball $\texttt{w}$. So, in the following, $(a_0,b_0)=(0,1)$. The main result of this subsection is exhibiting the algebraic nature of the generating function of histories.
\begin{theorem}\label{gf_algebric}
The bivariate generating function of histories $H(x,z) := H(x,1,z)$ is \emph{algebraic}, and the following polynomial in $y$ cancels it.\footnote{In this paper, we only treat the case $(a_0,b_0)=(0,1)$. The general case $H=X^{a_0}Y^{b_0}$ derives directly from this study, because $X$ and $Y$ are bound: \mbox{$H(x,y,z)=\left(x^{-\alpha} - y^{-\alpha} + Y^{-\alpha}\right)^{-a_0/\alpha} Y^{b_0}$}. We focus here on the function $Y$.}
\begin{equation} \label{eqn_algebric}
(z-A-B_x) \, y^{2\alpha + \beta} + B_x \,  y^{\alpha} + A = 0 \, , 
\end{equation}
$$\mbox{where}\qquad A := \frac{1}{2\alpha + \beta}=\sigma^{-1} \qquad\text{and}\qquad B_x  :=  \frac{x^{-\alpha} - 1}{\alpha + \beta}  \, .$$
\end{theorem}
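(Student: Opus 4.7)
The plan is to combine the Flajolet--Dumas--Puyhaubert isomorphism (Theorem~\ref{theo_fond}) with an explicit first integral of the associated differential system, which reduces the two-variable system to a single separable ODE in $Y$. Since the initial configuration is $(a_0,b_0)=(0,1)$, Theorem~\ref{theo_fond} gives $H(x,y,z) = X^{0} Y^{1} = Y(x,y,z)$, and at $y=1$ the equation to prove becomes an algebraic identity for $Y(x,1,z)$. The differential system attached to (\ref{urne_note1}) is
$$\dot X = X^{2\alpha+1} Y^{\beta}, \qquad \dot Y = X^{\alpha} Y^{\alpha+\beta+1},$$
so everything will rest on eliminating $X$ via a conserved quantity.

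The first step I would carry out is to search for an invariant of the flow. A direct computation gives
$$\frac{d}{dt}\bigl(X^{-\alpha}\bigr) \;=\; -\alpha\, X^{\alpha} Y^{\beta} \;=\; \frac{d}{dt}\bigl(Y^{-\alpha}\bigr),$$
so $X^{-\alpha} - Y^{-\alpha}$ is constant along every trajectory. Evaluating at $t=0$ with $(x_0,y_0)=(x,1)$ yields $X^{-\alpha} = Y^{-\alpha} + (x^{-\alpha}-1)$, and hence
$$X^{\alpha} \;=\; \frac{Y^{\alpha}}{1 + (x^{-\alpha}-1)\, Y^{\alpha}}.$$

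The second step is to substitute this into $\dot Y = X^{\alpha} Y^{\alpha+\beta+1}$, which produces the separable ODE
$$\bigl(1 + (x^{-\alpha}-1)\, Y^{\alpha}\bigr)\, Y^{-(2\alpha+\beta+1)}\, dY \;=\; dt.$$
Integrating termwise yields an expression of the form $-A\, Y^{-(2\alpha+\beta)} - B_x\, Y^{-(\alpha+\beta)} + \text{const} = z$, with the two constants $A$ and $B_x$ of the statement appearing naturally from the powers $2\alpha+\beta$ and $\alpha+\beta$ in the denominators. The initial condition $Y(0)=1$ fixes the integration constant to $A + B_x$, and multiplying through by $Y^{2\alpha+\beta}$ then gives precisely
$$(z - A - B_x)\, Y^{2\alpha+\beta} + B_x\, Y^{\alpha} + A \;=\; 0,$$
which is (\ref{eqn_algebric}).

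The proof is essentially mechanical once the first integral is written down, so the only real obstacle is spotting the invariant $X^{-\alpha} - Y^{-\alpha}$. This is forced by the specific structure of the matrix (\ref{urne_note1}): the two entries whose sum equals $\alpha$ times a symmetric factor make $\frac{d}{dt} X^{-\alpha}$ and $\frac{d}{dt} Y^{-\alpha}$ coincide, and this is precisely the hidden algebraic feature that singles out the class $\mathcal{A}(\alpha,\beta)$ inside the larger family of balanced additive urns (and which, a posteriori, explains why the generating function is algebraic rather than merely D-finite).
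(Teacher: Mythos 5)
Your proof is correct and follows essentially the same route as the paper: both identify the first integral $X^{-\alpha}-Y^{-\alpha}=\text{const}$ (the paper spots it from $\dot X/X^{\alpha+1}=\dot Y/Y^{\alpha+1}$, you spot it from $\frac{d}{dt}X^{-\alpha}=\frac{d}{dt}Y^{-\alpha}$, which is the same observation), use it to eliminate $X$ from the $\dot Y$ equation, integrate the resulting separable ODE, and pin the integration constant with the initial condition $Y(0)=1$. The only cosmetic difference is that you set $y_0=1$ at the outset while the paper keeps $y_0$ general until the last line; the substance is identical.
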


\begin{proof}
The exponential generating function of histories is $$H(x,y,z) = \sum_{n,i,j} H_n (i,j)\,  x^i y^j \frac{z^
n}{n !} \, ,$$
and the differential system associated to the urn is 
\begin{center}
$\Bigg\{$
\begin{tabular}{ccc}
$\dot{X}$  & $=$ & $X^{2\alpha +1}\, Y^{\beta}$ \\
$\dot{Y}$  & $=$ &  $X^{\alpha}\, Y^{\alpha + \beta +1} \, .$
\end{tabular}
\end{center}
Let $\left(X(x_0,y_0,t), Y(x_0,y_0,t)\right)$ be the associated solution, then thanks to Theorem~\ref{theo_fond}, and $a_0=0$ , $b_0=1$, we get
$H(x_0,y_0,) = Y(x_0,y_0,z)$.
Rewriting the system, $$\frac{\dot{X}}{X^{\alpha +1}} = X^{\alpha}Y^{\beta} = \frac{\dot{Y}}{Y^{\alpha +1}} \, , \qquad \text{so, } \qquad X^{-\alpha}  - Y^{-\alpha} = x_0^{-\alpha} - y_0^{-\alpha} \, . $$
Then, $$\frac{\dot{Y}}{Y^{\alpha + \beta + 1}} \left(Y^{-\alpha}  + x_0^{-\alpha} - y_0^{-\alpha} \right) = 1 \, . $$
After a $z$-integration and naming the integration constant $K(x_0,y_0)$, $$\frac{1}{2\alpha + \beta}Y^{-(2\alpha + \beta)} + \frac{x_0^{-\alpha} - y_0^{-\alpha}}{\alpha + \beta} Y^{-(\alpha + \beta)} = - (z - K(x_0,y_0)) \, . $$
We can set $y_0=1$ (the variable is useless because of the balanced condition. Indeed, in $H$, the non-negative coefficients of $x^iy^j z^n$ appear when $i+j = a_0+b_0 +\sigma n$), then we get the result.
Thus, the generating function $H(x,z):=H(x,1,z)$ is algebraic, as solution of a polynomial of degree $2\alpha + \beta$. \qed
\end{proof}
\subsection{Mean and Variance}\label{note1_moy}
From the polynomial equation (\ref{eqn_algebric}), we can directly extract some precise information concerning the expression of the mean and the variance of the urn composition. It is also possible to get all moments.
\begin{proposition}\label{moy_var}
Let $X_n$ be the random variable counting the number of black balls {\tt b} in the urn (\ref{urne_note1}) at step $n$. Then we can express mean and variance asymptotically: \footnote{\emph{Exact} expressions for mean and variance are also computable from this, see \cite{FlaSed09} (Prop.III.2, {\it p. 158} and {\it 728} ).}
\begin{equation}
{\mathbb{E}}(X_n) = \frac{\alpha (2\alpha + \beta)}{\alpha + \beta} n + \frac{\alpha}{\alpha + \beta} \frac{\rmgamma (\frac{1}{2\alpha + \beta})}{\rmgamma (\frac{\alpha + 1}{2\alpha + \beta})} n^{\frac{\alpha}{2\alpha + \beta}} + \frac{\alpha}{\alpha +\beta} + O \left(n^{\frac{\alpha}{2\alpha + \beta} -1}\right) \, ,
\end{equation}
\begin{equation}
{\mathbb{V}}(X_n) = \frac{\alpha^3 (2\alpha + \beta)}{(\alpha + \beta)^2} n + O\left(n^{\frac{\alpha+\beta}{2\alpha+\beta}}\right) \ \mbox{ where } \rmgamma (x) := \int_0^{\infty} t^{x-1} e^{-t} \text{\emph{d}}t \, .
\end{equation}
\end{proposition}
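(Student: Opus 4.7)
The plan is to read everything off the algebraic equation (\ref{eqn_algebric}) by implicit differentiation in $x$ at $x = 1$. Setting $x=1$ gives $B_1 = 0$, so (\ref{eqn_algebric}) collapses to $(z-A)y^\sigma + A = 0$ and $H(1,z) = Y(1,1,z) = (1-\sigma z)^{-1/\sigma}$. With the standard identity $n![z^n](1-\sigma z)^{-\gamma} = \sigma^n \rmgamma(n+\gamma)/\rmgamma(\gamma)$, the total number of histories is $T_n = \sigma^n \rmgamma(n+1/\sigma)/\rmgamma(1/\sigma)$, which serves as the denominator in every moment computation.

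For the mean, $\mathbb{E}(X_n) = n![z^n] \partial_x H(x,z)|_{x=1} / T_n$. Differentiating (\ref{eqn_algebric}) once in $x$ and using $B_x'(1) = -\alpha/(\alpha+\beta)$ together with the identity $(z-A)\sigma y^{\sigma-1} = -1/y$ coming from the collapsed equation at $x=1$, one finds that $\partial_x Y|_{x=1}$ reduces to a clean combination of the two powers $(1-\sigma z)^{-(\sigma+1)/\sigma}$ and $(1-\sigma z)^{-(\alpha+1)/\sigma}$. Coefficient extraction then expresses $\mathbb{E}(X_n)$ as a combination of Gamma ratios $\rmgamma(n+a)/\rmgamma(n+1/\sigma)$; the exact relation $\rmgamma(n+1+1/\sigma)/\rmgamma(n+1/\sigma) = n + 1/\sigma$ produces the linear part and its constant, while the Stirling estimate $\rmgamma(n+a)/\rmgamma(n+b) = n^{a-b}(1 + O(1/n))$ applied to $a = (\alpha+1)/\sigma$ produces the middle term of order $n^{\alpha/\sigma}$, leaving the announced error $O(n^{\alpha/\sigma - 1})$.

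The variance is obtained one order up. A second implicit differentiation of (\ref{eqn_algebric}) writes $\partial_x^2 Y|_{x=1}$ as a linear combination of a few powers $(1-\sigma z)^{-(k\alpha + \ell)/\sigma}$ with $k \leq 2$, so that the factorial moment $\mathbb{E}(X_n(X_n-1))$ is again a Gamma-ratio combination amenable to the same Stirling expansion. Assembling $\mathbb{V}(X_n) = \mathbb{E}(X_n(X_n-1)) + \mathbb{E}(X_n) - (\mathbb{E}(X_n))^2$ is where the main obstacle lies: one has to check that the $\Theta(n^2)$ contributions of $\mathbb{E}(X_n(X_n-1))$ and of $(\mathbb{E}(X_n))^2$ cancel exactly, and that the intermediate cross term of order $n^{1+\alpha/\sigma}$ cancels too, so that the linear-in-$n$ term emerges as the true leading order with coefficient $\alpha^3\sigma/(\alpha+\beta)^2$; the surviving sub-linear remainder $O(n^{(\alpha+\beta)/\sigma})$ is produced by the $y^{\alpha+\beta+1}$-type summand in the second derivative, whose exponent $(\alpha+\beta+1)/\sigma$ translates, via Stirling, into the announced $n^{(\alpha+\beta)/\sigma}$ correction.
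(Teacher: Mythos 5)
Your method is precisely the paper's: implicit differentiation of the algebraic equation (\ref{eqn_algebric}) in $x$ at $x=1$ (once for the mean, twice for the variance factorial moment), followed by coefficient extraction and normalization by $[z^n]H(1,1,z)$. The paper's proof is stated in a single line citing \cite{FlaSed09}; you have correctly unfolded it, including the key reductions $B_1'=-\alpha/(\alpha+\beta)$ and $(z-A)\sigma y^{\sigma-1}=-1/y$, and the resulting clean formula $\partial_x Y|_{x=1}=B_1'\bigl(y^{\alpha+1}-y^{\sigma+1}\bigr)$, so that each factorial moment is an explicit $\rmgamma$-ratio combination and the $n^2$ and $n^{1+\alpha/\sigma}$ cancellations in $\mathbb{V}(X_n)$ occur exactly as you say.

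One concrete inexactitude: writing out $\partial_x^2 Y|_{x=1}$ gives only the five powers $y^{2\sigma+1}$, $y^{\sigma+\alpha+1}$, $y^{2\alpha+1}$, $y^{\sigma+1}$, $y^{\alpha+1}$ (with coefficients $(\sigma+1)(B_1')^2$, $-2(\alpha+1)(B_1')^2$, $(1-\beta)(B_1')^2$, $-B_1''$, $B_1''$); there is no ``$y^{\alpha+\beta+1}$-type summand.'' After the $n^2$ and $n^{1+\alpha/\sigma}$ terms cancel, the largest surviving sub-linear contribution comes from the $y^{2\alpha+1}$ term and the $R_\alpha(n)^2$ piece of $(\mathbb{E}X_n)^2$, both of order $n^{2\alpha/\sigma}$, which equals the stated $n^{(\alpha+\beta)/\sigma}$ only when $\alpha=\beta$. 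So your structural explanation is right but the power you single out is not the one that appears; you should attribute the remainder to $y^{2\alpha+1}$ and $R_\alpha(n)^2$ rather than to a nonexistent $y^{\alpha+\beta+1}$ term.
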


\begin{proof}
The following techniques are described in \cite{FlaSed09}.
We differentiate the equation~(\ref{eqn_algebric}) with regards to $x$, then we set $x=1$;  we use the asymptotic expansion form and normalize by the asymptotic expansion of $(1-\sigma z)^{-1/\sigma}=: H(1,1,z)$.
For variance, we differentiate twice and we use the same technique. It can be done for all moments. Besides, it is possible to express any term in the asymptotic expansion, thanks to Dr. Salvy's {\sc Maple} package \texttt{gfun}.\footnote{available on \texttt{http://algo.inria.fr/libraries/}.} \qed
\end{proof}

%
%
\section{Asymptotic Results}\label{limit_th}

The aim of this section is to obtain properties on the limit distribution of the number of black balls in the urn. Our combinatorial point of view implies the study of $z$-coefficients of the generating function $H(x,z)$.
%
\subsection{Probability Expression and the Case $x=1$}
\begin{lemma}\label{yn_exp}
The $z^n$-coefficient in the generating function of histories $H(x,z)$ can by expressed by a contour integral (with a contour around the origin):
\begin{equation}\label{lem_yn}
[z^n] H(x,z) = \frac{\sigma^{n+1}}{2 i \pi} \oint a_x(w) h_x(w)^{n+1} \text{\emph{d}}w \, ,  \qquad\text{where}
\end{equation}
\begin{align}
 h_x(w) & = \left[ 1 + \frac{\sigma \left( x^{-\alpha} -1 \right) }{\alpha+\beta}     - (1-w)^{\alpha+\beta} \left(  \frac{\sigma\left( x^{-\alpha} -1 \right)}{\alpha+\beta}    + (1-w)^\alpha \right) \right]^{-1}  ,\\
  a_x(w) & = (1-w)^{\alpha + \beta -2} \left(   x^{-\alpha} -1 + \left( 1-w \right)^{\alpha}   \right) \, .
\end{align}
\end{lemma}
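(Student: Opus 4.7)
The plan is to recognize the integral as a Cauchy/Lagrange--inversion formula applied to the equation from Theorem~\ref{gf_algebric}. The key observation is that the algebraic equation~(\ref{eqn_algebric}) is quadratic in form and can be solved explicitly for $z$ in terms of $y$, so the natural move is to invert the roles of $z$ and $y$ and integrate with respect to a well-chosen uniformizing variable.

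More concretely, I would proceed as follows. First, solve~(\ref{eqn_algebric}) for $z$:
\begin{equation*}
z \;=\; A + B_x - B_x\, y^{\alpha - \sigma} - A\, y^{-\sigma}.
\end{equation*}
Introduce the change of variable $y = 1/(1-w)$, motivated by the boundary condition $H(x,1,0) = Y(x,1,0) = 1$, which corresponds to $w = 0$. Since $\sigma - \alpha = \alpha + \beta$, this gives
\begin{equation*}
z \;=\; \phi_x(w) \;:=\; A + B_x - B_x\,(1-w)^{\alpha+\beta} - A\,(1-w)^{\sigma}.
\end{equation*}
A short computation shows $\phi_x(0) = 0$ and $\phi_x'(0) = B_x(\alpha+\beta) + A\sigma = x^{-\alpha} \neq 0$, so $w(z)$ is analytic near $z=0$ and $y(z) = 1/(1-w(z))$ is the correct branch of the generating function of histories.

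Next, start from Cauchy's coefficient formula $[z^n]H(x,z) = \frac{1}{2\pi i}\oint H(x,z)\,z^{-n-1}\,dz$ around a small loop at the origin, and perform the change of variables $z = \phi_x(w)$, which is biholomorphic near the origin:
\begin{equation*}
[z^n]H(x,z) \;=\; \frac{1}{2\pi i}\oint \frac{1}{1-w}\cdot\frac{\phi_x'(w)}{\phi_x(w)^{\,n+1}}\,dw.
\end{equation*}
It then remains to match the integrand with the expressions claimed in the statement. Differentiating,
\begin{equation*}
\phi_x'(w) \;=\; (1-w)^{\alpha+\beta-1}\bigl[B_x(\alpha+\beta) + A\sigma(1-w)^{\alpha}\bigr] \;=\; (1-w)^{\alpha+\beta-1}\bigl[x^{-\alpha}-1 + (1-w)^{\alpha}\bigr],
\end{equation*}
so $(1-w)^{-1}\phi_x'(w) = a_x(w)$ exactly. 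Similarly, factoring $\sigma$ out of $\phi_x(w)$, using $\sigma A = 1$ and $\sigma B_x/(\alpha+\beta)$ unchanged, one sees that
\begin{equation*}
\sigma\,\phi_x(w) \;=\; 1 + \tfrac{\sigma(x^{-\alpha}-1)}{\alpha+\beta} - (1-w)^{\alpha+\beta}\Bigl(\tfrac{\sigma(x^{-\alpha}-1)}{\alpha+\beta} + (1-w)^{\alpha}\Bigr) \;=\; h_x(w)^{-1}.
\end{equation*}
Substituting these two identities yields the stated formula with the factor $\sigma^{n+1}$.

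I don't anticipate a real obstacle; the work is essentially bookkeeping. The only point requiring a small amount of care is justifying that $y = 1/(1-w)$ is the correct analytic branch: since $y(0)=1$ is a simple zero of the polynomial in~(\ref{eqn_algebric}) at $z=0$ (the leading coefficient $z-A-B_x$ being nonzero there for $x$ near $1$), the implicit function theorem guarantees a unique analytic solution, which must coincide with the branch produced by inverting the analytic function $\phi_x$. After that, the equality of the two integral representations follows from pure algebra.
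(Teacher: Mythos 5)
Your proof is correct and follows the same route as the paper: Cauchy's coefficient formula, then inverting $z$ as a function of $y$ from the algebraic equation~(\ref{eqn_algebric}) (what the paper calls the Lagrange inversion step), and finally the substitution $w=1-y^{-1}$. You merely composed the two changes of variable into the single map $z=\phi_x(w)$ and spelled out the bookkeeping that the paper leaves implicit.
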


\begin{proof}
We start with the polynomial (\ref{eqn_algebric}) which cancels $y(x,z):=H(x,z)$. We use a Cauchy formula to express the $z^n$-coefficient,
\begin{equation}\label{cauchy}
[z^n] H(x,z) = y_n(x)  =\frac{1}{2 i \pi} \oint \frac{y(x,z)}{z^{n+1}}\text{d}z \, .
\end{equation}
Then, by a Lagrange inversion\footnote{For details on Lagrange inversion, see \cite{FlaSed09} (Appendix A.6, {\it p. 732}).} between $z$ and $y(z)$, using \mbox{the equation~(\ref{eqn_algebric}),}
\begin{equation}
 z  = \sigma^{-1} + B_x - y^{-\sigma} \left( \sigma^{-1} + B_x y^{\alpha} \right) \, .
\end{equation}
At this step, it is not yet possible to apply a saddle-point method, because the saddle-points are at $\infty$. Again, we change variables, and set $w = 1 - y^{-1}$ to get the result. \qed
\end{proof}

From this lemma, we have to evaluate this integral in order to have an expression of $[z^n] H(x,z)$. It will be necessary for describing the limit probability distribution. Indeed, our main goal is to understand the behavior of the probability generating function noted $p_n(x)$,
\begin{align}
 p_n(x) = \sum_{i\geq 0} {\mathbb{P}}\left( X_n = i\right) x^i = \frac{[z^n] H(x,z)}{[z^n] H(1,z)} \, .
 \end{align}
First, we focus on the evaluation of \mbox{$ [z^n] H(1,z) $}. 
In the special case $x=1$,  we can solve explicitly the equation (\ref{eqn_algebric}). We have \mbox{$H(1,z) = (1 - \sigma z)^{-1/\sigma}$}. Thus, we know with classical analytic combinatorics that \mbox{$ y_n(1) \sim  n^{-\frac{\sigma - 1}{\sigma} }{\rmgamma \left( \sigma^{-1} \right)}^{-1}  \sigma^n $}. 
For $x \neq 1$, we don't have access to the explicit solution, therefore we use a saddle-point method for the general case. In the Appendix \ref{saddle-point}, the simple case $x = 1$ is treated with this method. It is useful to guess the right normalization for the variables and the contour for the general case. For illustration, Fig.~\ref{sadd3} shows the behavior of $h_1(w)$ and how to choose the contour.

\begin{figure}[htbp]
\begin{center}
\includegraphics[height=6cm]{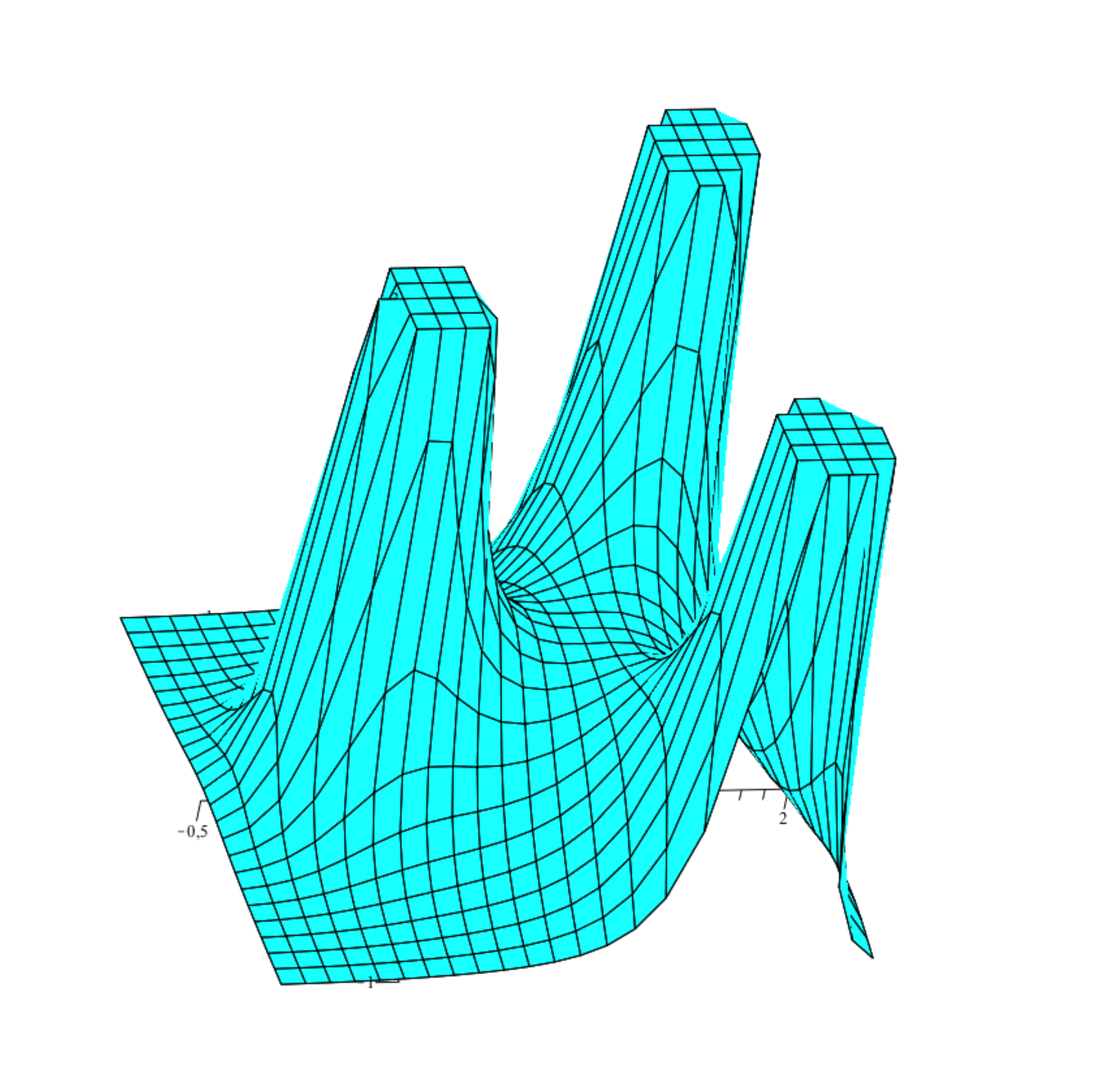}
\includegraphics[height=4cm]{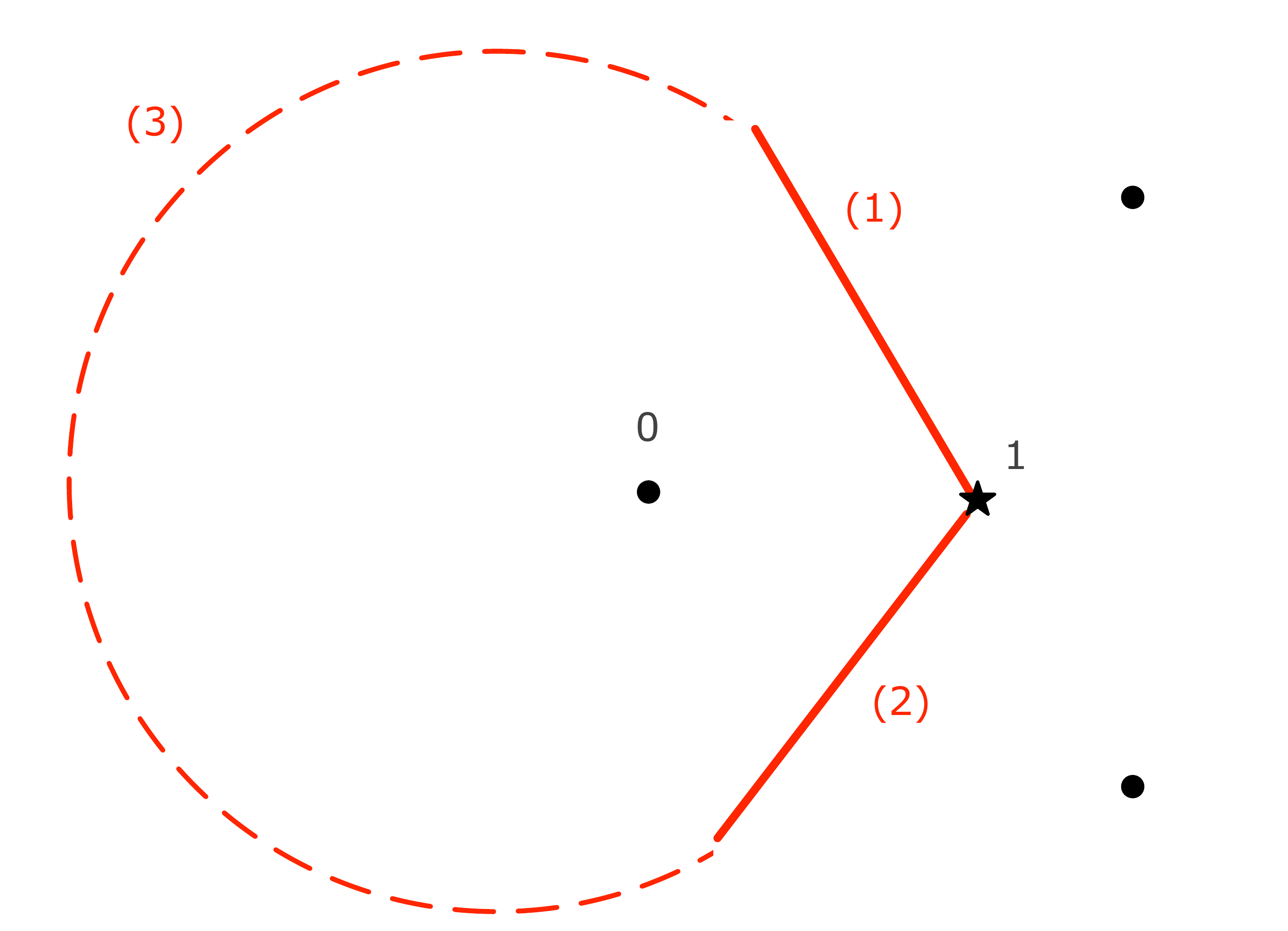}
\caption{{\it Left -} Double saddle-point for $h_1(w)$ for the urn $\mathcal{A}(1,1)$. The three {\it peaks} correspond to zeros of the denominator of $h_1(w)$ which are $0$, $1-j$ and $1-j^2$, where $j$ is the third-root of unity $\exp(2i\pi/3)$. {\it Right -} Diagram from a top view of the three  poles and the double saddle-point in 1, with the three-parts contour $\mathcal{C}_1$, $\mathcal{C}_2$, $\mathcal{C}_3$.}
\label{sadd3}
\end{center}
\end{figure}

\subsection{Limit Theorems}\label{gen_case}
Here is the main result of the paper. It is described in the three following theorems which concerns the asymptotic distribution of the balls in the urn. Proposition~\ref{y_expr} expresses the asymptotic expansion of the probability generating function $p_n(x)$. The refined saddle-point analysis is the core of the proof of this proposition. Then, the proofs of the theorems are based on this proposition and on Quasi-Power theorems from H.-K.Hwang, recalled in \cite{FlaSed09} {\it (p. 645, 696, 700)}.

Let $X_n$ be the random variable counting the number of black balls in the urn $\mathcal{A}(\alpha, \beta)$ after $n$ steps.
\begin{theorem}\label{th_loi_limite}{(Gaussian limit law)}
The random variable $X_n$ has mean $\mu_n$ and variance $\nu_n^2$, and the normalized random variable  $\frac{X_n - \mu n}{\nu \sqrt{n}}$ converges in law to the standard normal law  ${\mathcal{N}}(0,1)$, with rate of convergence $O\left( \frac{1}{\sqrt{n}} \right)$,
$$ \mu_n = {\mathbb{E}} (X_n) = \mu n + o(n) \ , \ \ \nu_n^2  = {\mathbb{V}} (X_n) = \nu^2 n  + o(n) \, ,$$
$${\mathbb{P}} \left\{  \frac{X_n - \mu n}{\nu \sqrt{n} }  \leqslant t \right\} = \Phi (t) + O \left( \frac{1}{\sqrt{n}}\right) \, , \ \text{where } \ \Phi (t) = \frac{1}{\sqrt{2 \pi}} \int_{-\infty}^t e^{-\frac{v^2}{2}} \text{\emph{d}}v \, ,$$
with  $$ \mu = \frac{\alpha (2\alpha+\beta)}{\alpha+\beta}  \qquad \text{and}\qquad  \nu^2  = \frac{\alpha^3 (2\alpha+\beta)}{\left( \alpha + \beta \right)^2} \, .$$
\end{theorem}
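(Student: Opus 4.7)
The plan is to derive a uniform \emph{quasi-power} expansion of the probability generating function $p_n(x)$ in a complex neighborhood of $x=1$, and then to invoke H.-K.~Hwang's Quasi-Power theorem in its Berry--Esseen form in order to obtain simultaneously the Gaussian convergence and the rate $O(n^{-1/2})$. The asymptotic expansion needed is exactly the content of Proposition~\ref{y_expr} announced above, and its proof is the analytical heart of the argument.

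The starting point is the contour representation of Lemma~\ref{yn_exp}. Setting $u := 1-w$ and $c_x := \sigma(x^{-\alpha}-1)/(\alpha+\beta)$, the reciprocal of $h_x(w)$ reads $(1+c_x) - c_x u^{\alpha+\beta} - u^{\sigma}$, so $h_x(1) = (1+c_x)^{-1}$ and the logarithmic derivative of $h_x$ vanishes precisely when
$$u^{\alpha+\beta-1}\bigl[\sigma u^{\alpha} + c_x(\alpha+\beta)\bigr] = 0.$$
One thus sees a higher-order saddle at $u = 0$ of multiplicity $\alpha+\beta-1$ together with $\alpha$ secondary saddles at $u^{\alpha} = -c_x(\alpha+\beta)/\sigma$, all coalescing with the first as $x\to 1$. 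This is exactly the multiple coalescence pictured in Figure~\ref{sadd3}, and it is the principal obstacle of the whole analysis: a plain Laplace argument around a single, isolated saddle is not uniform in $x$ near $1$.

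To overcome this, I would rescale locally by setting $u = n^{-1/\sigma} v$ together with the auxiliary variable $s := n^{\alpha/\sigma}(x^{-\alpha}-1)$, and deform the original contour around $w=0$ onto the three-part path $\mathcal{C}_1 \cup \mathcal{C}_2 \cup \mathcal{C}_3$ shown on the right of Figure~\ref{sadd3}. After substitution, the contribution from the neighborhood of $w = 1$ becomes, up to polynomial prefactors, a fixed $v$-integral of the form
$$\int_{\Gamma} v^{\alpha+\beta-2}\bigl(v^{\alpha} + s/\sigma\bigr)\, \exp\!\left(v^{\sigma} + \tfrac{s}{\alpha+\beta}\, v^{\alpha+\beta}\right)\, dv,$$
which is entire in the parameter $s$ and, at $s=0$, reduces to a Hankel integral giving the classical $\sigma^n n^{1/\sigma-1}/\Gamma(1/\sigma)$ asymptotic already known for $[z^n] H(1,z)$. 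The remaining arcs of the contour, lying between the other poles $1-\zeta$ of $h_x$ with $\zeta^\sigma = 1$, contribute only exponentially small terms once pushed onto steepest-descent rays. Carrying the expansion one further order yields
$$[z^n] H(x,z) = C(x)\, B(x)^n\, \sigma^n\, n^{1/\sigma - 1}\bigl(1 + O(n^{-1/2})\bigr),$$
uniformly for $x$ in a fixed complex neighborhood of $1$, with $C$ and $B(x) = (1+c_x)^{-1}$ analytic at $x=1$ and $C(1) = B(1) = 1$.

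Taking the ratio $p_n(x) = [z^n] H(x,z)/[z^n] H(1,z)$ collapses the algebraic factor $n^{1/\sigma-1}$ and yields the clean quasi-power estimate $p_n(x) = (C(x)/C(1))\, B(x)^n\,(1 + O(n^{-1/2}))$. A direct computation of the logarithmic derivatives of $B$ at $1$ returns $B'(1)/B(1) = \mu$ and $B''(1)/B(1) + B'(1)/B(1) - (B'(1)/B(1))^2 = \nu^2$, in agreement with Proposition~\ref{moy_var}; the variability condition $\nu^2 > 0$ is automatic since $\alpha > 0$. Hwang's Quasi-Power theorem in its Berry--Esseen version (recalled in \cite{FlaSed09}) then delivers the Gaussian convergence with rate $O(n^{-1/2})$, matching the precision of the saddle-point step. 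The hard part is unquestionably the uniform analysis of the coalescing saddles; the remainder is essentially mechanical.
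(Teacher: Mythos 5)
Your proposal is correct in substance and follows essentially the same route as the paper: the contour representation of Lemma~\ref{yn_exp}, the rescaling of $w$ at the $n^{-1/\sigma}$ scale and of $x$ at the $n^{-1/2}$ scale, the uniform treatment of the $\alpha+1$ coalescing saddles near $w=1$, the reduction to a Gamma-type integral, and finally Hwang's Quasi-Power theorem. The only point worth flagging is your claim that the estimate $[z^n]H(x,z) = C(x)B(x)^n\sigma^n n^{1/\sigma-1}(1+O(n^{-1/2}))$ holds ``uniformly for $x$ in a fixed complex neighborhood of $1$'': the rescaled argument you sketch, with $s = n^{\alpha/\sigma}(x^{-\alpha}-1)$ held bounded, only directly delivers uniformity in a shrinking window of radius $n^{-\alpha/\sigma}$ (which is what the paper's Proposition~\ref{y_expr} states, at the scale $n^{-1/2}$), so either $s$ must be allowed to grow or a separate singularity-analysis argument for fixed $x$ must be added to honestly reach a fixed $\Omega$.
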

\begin{theorem}\label{th_loi_locale}{(Local limit law)}
 We denote $p_{n,k} = {\mathbb{P}}\left\{ X_n =k \right\}$. The distribution of $X_n$ satisfies a local limit law of Gaussian type with rate of convergence $O\left( \frac{1}{\sqrt{n}} \right)$,
i.e. $$ \sup_{t \in {\mathbb{R}}} \left|  \nu \sqrt{n} p_{n, \lfloor \mu n  + t \nu \sqrt{n} \rfloor}  - \frac{1}{\sqrt{2 \pi }}  e^{-t^2/2} \right|  \leqslant \frac{1}{\sqrt{n}}  \, .$$
\end{theorem}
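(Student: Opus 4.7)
The strategy is to upgrade the central limit result of Theorem~\ref{th_loi_limite} to a local limit law by invoking Hwang's quasi-powers local limit theorem (Flajolet--Sedgewick, Theorem IX.14, \emph{p.~696--700}). The ingredient I would rely on is Proposition~\ref{y_expr}, which furnishes a quasi-power representation of the probability generating function of the form
\begin{equation*}
p_n(x) = A(x)\, B(x)^n \bigl(1 + O(n^{-1/2})\bigr),
\end{equation*}
uniformly for $x$ in some complex neighbourhood of $x = 1$, with $A, B$ analytic at $1$, $B(1)=1$, $\mu = B'(1)/B(1)$, and $\nu^{2} = (\log B)''(1) > 0$ consistent with Proposition~\ref{moy_var}.

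From this point I would pass through Cauchy's formula on the unit circle, $x = e^{i\theta}$, writing
\begin{equation*}
p_{n,k} = \frac{1}{2\pi} \int_{-\pi}^{\pi} p_n(e^{i\theta})\, e^{-ik\theta}\, d\theta,\qquad k = \lfloor \mu n + t \nu \sqrt{n} \rfloor,
\end{equation*}
and then split the range at some $\theta_{0} = n^{-1/2} \log n$. On the central range $|\theta| \leq \theta_0$ I would substitute $\theta = u/(\nu \sqrt{n})$ and insert the quasi-power expansion: a Taylor expansion of $\log B$ around $\theta = 0$ shows that the integrand converges, uniformly in $t$, to $\exp(-u^{2}/2 - itu)$ up to a multiplicative error of size $O(n^{-1/2})$. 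Gaussian integration over $u \in \mathbb{R}$ then yields, after normalisation by $\nu \sqrt{n}$, the density $(2\pi)^{-1/2} e^{-t^{2}/2}$ with the required rate $O(n^{-1/2})$ uniform in $t$.

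The hard part will be handling the tail $\theta_{0} \leq |\theta| \leq \pi$, where I must check that $|p_n(e^{i\theta})|$ is exponentially smaller than $p_n(1) = 1$. For $|\theta|$ still small, the quasi-power form together with $\nu^{2} > 0$ gives a Gaussian bound $|p_n(e^{i\theta})| \leq e^{-\eta n \theta^{2}}$ for some $\eta > 0$, which is already negligible as soon as $\theta \geq \theta_{0}$. For $|\theta|$ bounded away from $0$, I would revisit the contour integral of Lemma~\ref{yn_exp} with $x = e^{i\theta}$, $\theta \neq 0$: the multiple coalescing saddle point $w_{s}(x)$ of $h_{x}(w)$ persists, but its altitude satisfies $|h_{x}(w_{s})| < h_{1}(w_{s}) = \sigma^{-1}$, because the dominant $z$-singularity of $H(x,\cdot)$ obtained from equation~(\ref{eqn_algebric}) is unique at $x = 1$ and moves strictly outside $|z| \leq \sigma^{-1}$ whenever $x$ leaves the point $1$ along the unit circle. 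This produces an exponential bound $|p_n(e^{i\theta})| \leq e^{-c n}$ uniform on $|\theta| \in [\delta, \pi]$ for some $\delta, c > 0$, and continuity in $\theta$ bridges the intermediate band $\theta_{0} \leq |\theta| \leq \delta$. Combining the central Gaussian estimate with this negligible tail delivers the announced bound
\begin{equation*}
\sup_{t \in \mathbb{R}} \Bigl| \nu \sqrt{n}\, p_{n, \lfloor \mu n + t \nu \sqrt{n}\rfloor} - \tfrac{1}{\sqrt{2\pi}} e^{-t^{2}/2} \Bigr| = O\bigl(n^{-1/2}\bigr),
\end{equation*}
which is precisely the stated local limit law.
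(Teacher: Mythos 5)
Your proposal takes essentially the same route as the paper: Proposition~\ref{y_expr} delivers the quasi-power form of $p_n(x)$, and the local limit law is then obtained by invoking Hwang's quasi-power local limit theorem from Flajolet--Sedgewick. The paper simply cites that theorem (\emph{p.~696, 700}); what you add is a re-derivation of its inner workings (Fourier inversion on $|x|=1$, central-range Taylor expansion, tail cut), which is accurate but not a different method.

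One point your more explicit treatment surfaces, and which is worth flagging since the paper is silent about it: your tail bound on $\theta_0 \le |\theta| \le \pi$ claims that the dominant $z$-singularity of $H(x,\cdot)$ moves strictly outside $|z|\le \sigma^{-1}$ whenever $x\ne 1$ on the unit circle. That is an aperiodicity assertion. In the urn $\mathcal{A}(\alpha,\beta)$ started from $(a_0,b_0)=(0,1)$, each step adds either $2\alpha$ or $\alpha$ black balls, so $X_n$ lives on the lattice $\alpha\mathbb{Z}$; consequently $p_n\bigl(e^{2\pi i k/\alpha}\bigr)=1$ for every integer $k$, and for $\alpha>1$ the exponential bound $|p_n(e^{i\theta})| \le e^{-cn}$ cannot hold near $\theta = 2\pi k/\alpha$. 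The local limit statement as written (with denominator $\nu\sqrt{n}$ and $k=\lfloor \mu n + t\nu\sqrt{n}\rfloor$ ranging over all integers) therefore only holds verbatim when $\alpha=1$; for $\alpha>1$ one must either rescale $X_n$ by $\alpha$ or restrict $k$ to the support lattice. This hypothesis is implicitly needed both in your argument and in the paper's appeal to the quasi-power local limit theorem (whose statement requires the quasi-power form uniformly on $|x|=1$), so it is a gap in the source as much as in your reconstruction.

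A minor quibble on rates: Proposition~\ref{y_expr} yields an error term $O\bigl(u\,n^{-\beta/(2\sigma)}\bigr)$, and since $\beta/(2\sigma) < 1/2$ for all admissible $\alpha,\beta>0$, the convergence rate produced by Hwang's theorems is $O\bigl(n^{-\beta/(2\sigma)}\bigr)$, not $O(n^{-1/2})$. You inherit the paper's $O(n^{-1/2})$ claim without comment; this is again the paper's imprecision rather than a flaw of your route.
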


\noindent The last theorem concerns large deviation bounds.
\begin{definition} (Large deviations property)
Let $\left(a_n\right)$ be a sequence tending to infinity. 
A sequence of random variables $(X_n)$ with mean ${\mathbb{E}}X_n \sim \mu a_n$, satisfies a \emph{large deviation property}, relative to the interval $[t_0,t_1]$ containing $\mu$, if there exists a function $W(t)$ such as $W(t)>0$ for $t \neq \mu$, and for $n$ large enough,
$$ \forall t, \ t_0 < t < \mu ,  \ \frac{1}{a_n} \log {\mathbb{P}}(X_n \leqslant t a_n) = - W(t) + o(1)  \ \ \  \text{\textsl{(left tail)}} \, ,$$
$$ \forall t, \  \mu < t < t_1 ,  \ \frac{1}{a_n} \log {\mathbb{P}}(X_n \geqslant t a_n) = - W(t) + o(1)  \ \ \  \text{\textsl{(right tail)}} \, ,$$
W(t) is called the \emph{rate function}, and $a_n$ is the \emph{scale factor}.
\end{definition}
\begin{theorem}\label{grandes_deviations}{(Large deviations)}
For $\xi$ such as $0 < \xi < 1$, the sequence of random variables $(X_n)$ satisfies a large deviation property relative to the interval $[t_0,t_1]$, with a scale factor $n$, and a rate $W(t)$, with
$x_0 = \xi, \ x_1 = 2 -\xi \, ,$
$$t_0 = x_0 \frac{\chi'(x_0)}{\chi(x_0)}  \ \text{ and } \ t_1 =  x_1 \frac{\chi'(x_1)}{\chi(x_1)} \, ,$$
$$ W(t) = - \min_{x \in  \left[ x_0, x_1 \right] } \ \log \left( \frac{\chi(x)}{x^t} \right) \, , \text{ where } \chi(x) = x^{\mu} \, e^{ \frac{\nu^2}{2}  \ln(x)^2} \, .$$ 
\end{theorem}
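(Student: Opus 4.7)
The plan is to deduce Theorem~\ref{grandes_deviations} from Hwang's quasi-power large deviation theorem recalled in \cite{FlaSed09}, applied to the probability generating function $p_n(x)$. The entire combinatorial and analytic content of the paper is packed into Proposition~\ref{y_expr}, which I would read as providing a uniform quasi-power expansion
$$p_n(x) \;=\; A(x)\,\chi(x)^n\,\bigl(1 + o(1)\bigr), \qquad \chi(x) = x^{\mu}\,e^{(\nu^2/2)\log^2 x},$$
valid for $x$ in a complex neighborhood of the real interval $[x_0, x_1] = [\xi, 2-\xi]$, with $A(x)$ and $\chi(x)$ analytic and nonzero there. Granted this input, only the standard Cram\'er/Chernoff machinery remains.

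First I would establish the upper bound. For $t \in (\mu, t_1]$ and any real $x \in [1, x_1]$, Markov's inequality gives
$$\mathbb{P}(X_n \geq tn) \;\leq\; x^{-tn}\,p_n(x),$$
and the quasi-power estimate then yields $-\frac{1}{n}\log \mathbb{P}(X_n \geq tn) \geq t\log x - \log\chi(x) + o(1)$ uniformly on the interval. Because $s \mapsto \log \chi(e^s) = \mu s + \tfrac{\nu^2}{2}s^2$ is strictly convex, optimizing over $x \in [1, x_1]$ produces a unique interior maximizer $x^\ast(t)$ characterized by $t = x\,\chi'(x)/\chi(x)$; this is exactly the relation that binds $t_1$ to $x_1$, and guarantees the interior maximizer exists precisely for $t \leq t_1$. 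The left tail follows symmetrically with $x \in [x_0, 1)$.

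Second, I would obtain the matching lower bound by the classical saddle-point / shift-of-measure argument on the Cauchy integral
$$p_{n,k} \;=\; \frac{1}{2i\pi}\oint p_n(x)\,x^{-k-1}\,dx,$$
taken along the circle $|x| = x^\ast(t)$ with $k = \lfloor tn\rfloor$. The uniform quasi-power expansion turns this into a Gaussian-type saddle-point integral centered at $x = x^\ast(t)$, giving $p_{n,\lfloor tn \rfloor} = \Theta(n^{-1/2})\,e^{-nW(t)}$ with $W(t) = t\log x^\ast(t) - \log\chi(x^\ast(t))$, which is precisely the Legendre-transform formula appearing in the theorem. Summing over $k \geq tn$ introduces at most a polynomial factor, since strict convexity of $\log\chi(e^s)$ keeps the ratio $p_{n,k+1}/p_{n,k}$ bounded away from $1$ past the saddle.

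The hard part is not the Cram\'er machinery itself but the \emph{uniformity} of the quasi-power expansion over the whole interval $[\xi, 2-\xi]$. Proposition~\ref{y_expr} is proved by a multiple coalescing saddle-point analysis tailored to a vicinity of $x = 1$; one has to verify that as $x$ moves along $[\xi, 2-\xi]$, the relevant saddle of $h_x(w)$ stays simple and well separated from the other zeros of the denominator of $h_x$ (the points $1-j$, $1-j^2$ of Fig.~\ref{sadd3}, and more generally the $(\alpha+\beta)$-th roots produced by $-\sigma(x^{-\alpha}-1)/(\alpha+\beta)$ for $x\neq 1$), so that the three-arc contour $\mathcal{C}_1\cup\mathcal{C}_2\cup\mathcal{C}_3$ can be continuously deformed. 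The restriction $0 < \xi < 1$ encodes precisely how far from $x = 1$ this deformation survives, and hence the width of the deviation range that the method can cover.
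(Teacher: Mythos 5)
Your route is the same as the paper's: read Proposition~\ref{y_expr} as a quasi-power estimate $p_n(x)=\chi(x)^n\bigl(1+o(1)\bigr)$, then run Cram\'er/Chernoff — Markov's inequality for the upper bound, a shifted Cauchy saddle at $|x|=x^\ast(t)$ for the lower bound — to get the Legendre-transform rate $W(t)=-\min_x \log\bigl(\chi(x)/x^t\bigr)$. The paper compresses exactly this machinery into a one-line citation to Hwang's Quasi-Power theorem for large deviations (Flajolet--Sedgewick, p.~700); you have simply unpacked that black box.

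The concern you raise in your last paragraph is the substantive point, and it is well placed. Hwang's large deviation theorem requires the quasi-power estimate to hold \emph{uniformly on a fixed real interval} $[x_0,x_1]$ containing $1$, with $\chi$ analytic and nonvanishing in a complex neighbourhood of that interval; only then does it deliver a rate function supported on a fixed $[t_0,t_1]$. But Proposition~\ref{y_expr} (driven by Proposition~\ref{hx_exp}) is established only for $x=e^{iu/\sqrt n}$ or $x=1+iun^{-1/2}-u^2n^{-1}$, i.e.\ for $x$ in a neighbourhood of $1$ shrinking like $n^{-1/2}$ — the Remark following Proposition~\ref{y_expr} even says so verbatim (``a real segment centered in $1$ of length $n^{-1/2}$''). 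That range only reaches $t=\mu+O(n^{-1/2})$: a moderate-deviation regime, not the fixed-width large deviation range $[t_0,t_1]$ asserted in Theorem~\ref{grandes_deviations}. Extending the saddle analysis to fixed $x\in[\xi,2-\xi]$ means tracking the $\alpha$ now-separated roots of $(1-w)^\alpha = 1-x^{-\alpha}$, choosing which one the deformed contour uses, and verifying it stays clear of the poles of $h_x$; the paper never does this, and your proof (correctly) inherits the gap but, unlike the paper, names it. So: same approach as the paper, and your diagnosis of where the real work lies is accurate — that work is left open by the paper's own proof as well.
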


\noindent The main proposition in the proof of the theorems is the asymptotic expression of the probability generating function $p_n(x)$, obtained by the division of $y_n(x)$ by $y_n(1)$, where $y_n(x)$ denotes the $z^n$-coefficient in $H(x,z)$.
\begin{proposition}\label{y_expr}

For $x =  e^{i u /\sqrt{n}}$
,  the expression of the probability generating function $p_n(x)$ is, asymptotically when $n \rightarrow \infty$,
\begin{align}\label{pn_expr}
 p_n(x) =  \exp \left( \mu  i u \sqrt{n} - \frac{\nu^2}{2} u^2 \right) \left( 1 + O\left( u n^{-\frac{\beta}{2(2\alpha+\beta)}}\right)\right) \, ,
 \end{align}
with $$ \mu = \frac{\alpha (2\alpha+\beta)}{\alpha+\beta}  \qquad \text{and}\qquad  \nu^2  = \frac{\alpha^3 (2\alpha+\beta)}{\left( \alpha + \beta \right)^2} \, .$$ 
\end{proposition}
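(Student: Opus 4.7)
The plan is to start from the contour integral in Lemma~\ref{yn_exp} and apply a multiple coalescing saddle-point method, engineered so that the Gaussian structure emerges as a clean prefactor, separated from a residual saddle-point integral that is essentially insensitive (at the required precision) to the perturbation $x=e^{iu/\sqrt{n}}$ away from $x=1$.

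Setting $s=1-w$ and $\delta := \sigma B_x = \frac{\sigma(x^{-\alpha}-1)}{\alpha+\beta}$, one rewrites $h_x(w)^{-1} = (1+\delta) - \delta\,s^{\alpha+\beta} - s^{\sigma}$, which is particularly clean at $w=1$, where $h_x(1) = (1+\delta)^{-1}$. Pulling out this boundary value,
$$h_x(w)^{n+1} = h_x(1)^{n+1}\left[1 - \frac{s^{\sigma} + \delta\, s^{\alpha+\beta}}{1+\delta}\right]^{-(n+1)},$$
isolates the Gaussian contribution. Indeed, for $x=e^{iu/\sqrt n}$, expanding $x^{-\alpha}$ through order $u^2/n$ gives $\delta = -\frac{i\alpha\sigma u}{(\alpha+\beta)\sqrt n} - \frac{\alpha^2\sigma u^2}{2(\alpha+\beta)\,n} + O(u^3 n^{-3/2})$. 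Substituting into $\log(1+\delta)=\delta-\delta^2/2+\cdots$ and using the identity $\sigma-(\alpha+\beta)=\alpha$, the $u^2$ terms collapse to
$$-(n+1)\log(1+\delta) = \mu\,iu\sqrt n - \tfrac{\nu^2}{2}u^2 + O(u^3/\sqrt n),$$
with precisely $\mu=\alpha\sigma/(\alpha+\beta)$ and $\nu^2=\alpha^3\sigma/(\alpha+\beta)^2$, so $h_x(1)^{n+1}$ already carries the claimed Gaussian.

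The analytic heart of the proof is to show that the residual integral
$$I_n(x) := \frac{1}{2i\pi}\oint a_x(w)\left[1 - \frac{s^{\sigma} + \delta\, s^{\alpha+\beta}}{1+\delta}\right]^{-(n+1)}\!dw$$
satisfies $I_n(x) = I_n(1)\bigl(1 + O(u\,n^{-\beta/(2\sigma)})\bigr)$. The relevant geometry is that of Fig.~\ref{sadd3}: for $x=1$, the derivative $u'(w) = (1-w)^{\alpha+\beta-1}\bigl[\delta(\alpha+\beta) + \sigma(1-w)^{\alpha}\bigr]$ has a zero of order $\sigma-1$ at $w=1$, pinched between the $\sigma$ poles $w=1-e^{2\pi ik/\sigma}$, $0\le k<\sigma$. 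One deforms the origin-enclosing contour onto the three-part contour $\mathcal{C}_1\cup\mathcal{C}_2\cup\mathcal{C}_3$ passing through this coalescing region, and rescales $s = t\,n^{-1/\sigma}$ so that $n\,s^{\sigma}=t^{\sigma}$ stays of order one. The perturbation then contributes
$$n\cdot\frac{\delta\,s^{\alpha+\beta}}{1+\delta} = O\!\bigl(u\,t^{\alpha+\beta}\,n^{1-(\alpha+\beta)/\sigma-1/2}\bigr) = O\!\bigl(u\,t^{\alpha+\beta}\,n^{-\beta/(2\sigma)}\bigr),$$
so expanding $[1-\cdots]^{-(n+1)}$ to first order in this small quantity produces exactly the claimed relative error, the zeroth-order contribution being $I_n(1)$ itself. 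Dividing by $y_n(1)=\sigma^{n+1}I_n(1)$ then cancels the common integral and gives $p_n(x)=h_x(1)^{n+1}(1+O(u\,n^{-\beta/(2\sigma)}))$, which matches the statement.

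The main obstacle is the uniform implementation of this coalescing saddle-point step: one must verify that the local expansion about the splitting high-order saddle is uniform in $u$ on the range needed for the subsequent theorems (at least $|u|=O(n^{1/6})$ for the Berry-Esseen-type rate of Theorem~\ref{th_loi_limite}), control the three sub-contours $\mathcal{C}_1,\mathcal{C}_2,\mathcal{C}_3$ separately with an appropriate steepest-descent parametrization, and bound the non-local tails (the portions of $\mathcal{C}_i$ away from $w=1$) by exponentially smaller quantities. The $x=1$ case carried out in Appendix~\ref{saddle-point} supplies the template; the essential new input here is that a perturbation of size $O(u/\sqrt n)$ in $x$ deforms the cluster of saddles only to an extent captured by the single multiplicative factor $1+O(u\,n^{-\beta/(2\sigma)})$.
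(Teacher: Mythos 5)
Your approach is essentially the paper's own, reorganized so the Gaussian is made explicit as a prefactor. The paper's Proposition~\ref{hx_exp} performs exactly the expansion you describe: writing $h_x(w)^{-1} = (1+\delta) - \delta s^{\alpha+\beta} - s^{\sigma}$ with $s=1-w$, the factor $(1+\delta)^{-n}=h_x(1)^{n}$ carries the Gaussian $e^{\mu i u\sqrt n - \frac{\nu^2}{2}u^2}$ and the residual bracket, after the rescaling $s^{\sigma}\sim -t/n$ on $\mathcal{C}_1$, produces $e^{-t}$ times a multiplicative correction $\exp\bigl(Kt^{(\alpha+\beta)/\sigma}u\,n^{-\beta/(2\sigma)}+O(n^{-1/2}(iu^3+ut))\bigr)$ --- precisely your $O(u\,n^{-\beta/(2\sigma)})$ term, up to your different normalization of the contour parameter. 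Your computation of $-(n+1)\log(1+\delta)$, and the resulting values of $\mu$ and $\nu^2$, is correct; the lemma cascade (\ref{tail_cut}, \ref{C1_part}, \ref{tail_complete}, \ref{2-parts}, \ref{3-part}) in the paper handles the uniformity and tail issues you flag at the end. Making $h_x(1)^{n+1}$ explicit as an overall factor is a small but genuine clarification over the paper's presentation.

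There is, however, one substantive omission. In your residual integral $I_n(x)$ you treat the $x$-dependence as living entirely in the bracket, but $a_x(w)$ also depends on $x$: writing it out,
$$a_x(w) = s^{\sigma-2} + s^{\alpha+\beta-2}\bigl(x^{-\alpha}-1\bigr) = a_1(w)\Bigl(1 + s^{-\alpha}\bigl(x^{-\alpha}-1\bigr)\Bigr),$$
and on the rescaled contour $s = O(n^{-1/\sigma})$, $x^{-\alpha}-1 = O(u/\sqrt n)$, one gets $s^{-\alpha}(x^{-\alpha}-1) = O\!\bigl(u\,n^{\alpha/\sigma-1/2}\bigr) = O\!\bigl(u\,n^{-\beta/(2\sigma)}\bigr)$, i.e.\ a perturbation of exactly the same order as the one you compute from $\delta s^{\alpha+\beta}$. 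So your deduction $I_n(x)=I_n(1)\bigl(1+O(u\,n^{-\beta/(2\sigma)})\bigr)$ is not yet justified as written: a second source of error of equal magnitude is silently dropped. The paper acknowledges this by also expanding $a_x$ on $\mathcal{C}_1$ in Lemma~\ref{C1_part}. The final conclusion is unchanged because the contribution happens to be of the claimed order (and remains integrable against $t^{-1/\sigma}e^{-t}$ since $\alpha,\beta$ are positive integers, so $(1+\alpha)/\sigma<1$), but you must track it to close the argument.
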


\begin{remark} This expression (\ref{pn_expr}) of $p_n(x)$ is valid
for $x$ in a neighborhood of $1$,  with the radius $n^{-1/2}$ (which will give the Gaussian limit law);
for $x$ in  a real segment centered in $1$ of length $n^{-1/2}$  (which will give large deviation);
for $x$ in the unit circle,  $|x| = 1$  (which will give a local limit law).
\end{remark}

\subsection{Proof of Proposition~\ref{y_expr}}\label{mainproof}
\begin{proposition}
For all $x$, $h_x(w)$  has a simple pole in $w=0$, and $2 \alpha + \beta - 1$ other simple poles.
Saddle-points are in 
$w=1$ with multiplicity $\alpha + \beta -1$,
and in $w =  1- \gamma$, with $\gamma$ any  $\alpha$th-root of $(1-x^{-\alpha})$.
\end{proposition}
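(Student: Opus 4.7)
My plan is to perform the substitution $u = 1-w$, which turns $h_x$ into the reciprocal of an explicit polynomial, and then read off the poles and saddle points from that polynomial together with its derivative. Setting $C := \sigma(x^{-\alpha}-1)/(\alpha+\beta)$, the denominator of $h_x(w)$ becomes
\[
P(u) := 1 + C - C\,u^{\alpha+\beta} - u^{\sigma}, \qquad u = 1-w,
\]
a polynomial of degree $\sigma = 2\alpha+\beta$ in $u$. Since $h_x(w) = 1/P(1-w)$ and $h_x'(w) = P'(1-w)/P(1-w)^2$, poles of $h_x$ are precisely the roots of $P$, and saddle points are precisely the zeros of $P'$ at which $P$ does not vanish.

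Differentiation yields the clean factorization
\[
P'(u) = -u^{\alpha+\beta-1}\bigl[(\alpha+\beta)C + \sigma u^{\alpha}\bigr],
\]
whose zeros are $u=0$ with multiplicity $\alpha+\beta-1$, together with the $\alpha$ simple roots of $u^\alpha = -(\alpha+\beta)C/\sigma = 1-x^{-\alpha}$. Via $w=1-u$ these correspond to $w=1$ (with multiplicity $\alpha+\beta-1$) and the $\alpha$ points $w=1-\gamma$ with $\gamma$ an $\alpha$-th root of $1-x^{-\alpha}$, exactly as claimed. The multiplicities sum to $\sigma-1 = \deg P'$, so no zero of $P'$ has been missed.

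It remains to verify that these candidates are genuine saddle points (i.e.\ $P(0)$ and $P(\gamma)$ are non-zero) and that all $\sigma$ roots of $P$ are simple. Direct evaluation gives $P(0) = 1+C$ and, using $\gamma^\alpha = 1-x^{-\alpha}$ to simplify $C+\gamma^\alpha = \tfrac{\alpha}{\alpha+\beta}(x^{-\alpha}-1)$,
\[
P(\gamma) = 1+C - \tfrac{\alpha}{\alpha+\beta}(x^{-\alpha}-1)\,\gamma^{\alpha+\beta};
\]
both expressions tend to $1$ as $x \to 1$ and are therefore non-zero in a neighbourhood of $x=1$, which is the range required by the saddle-point analysis. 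A multiple root of $P$ would be a common zero of $P$ and $P'$, so the non-vanishing of $P$ at the zeros of $P'$ forces every root of $P$ to be simple; the special case $P'(1) = -(\alpha+\beta)C - \sigma = -\sigma x^{-\alpha}\neq 0$ recovers that $w=0$ is a simple pole. The main obstacle is this last bookkeeping, ensuring that the $\sigma$ poles and $\sigma-1$ saddle points are all distinct and located where claimed; the substitution $u=1-w$ makes the underlying algebra transparent because $h_x$ is already presented in terms of $(1-w)^{\alpha+\beta}$ and $(1-w)^\alpha$.
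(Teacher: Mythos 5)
Your proof is correct and follows essentially the same route as the paper's terse proof, which simply displays the factorized derivative $h'_x(w) = -\sigma\, h_x(w)^2\,(1-w)^{\alpha+\beta-1}\bigl(x^{-\alpha}-1+(1-w)^\alpha\bigr)$; your $P'(u) = -\sigma\,u^{\alpha+\beta-1}\bigl(x^{-\alpha}-1+u^\alpha\bigr)$ is exactly that expression after the substitution $u=1-w$. You go further than the paper by explicitly verifying that the poles are simple and that the saddle candidates are not poles (for $x$ near $1$), which is correct and welcome bookkeeping the paper's one-line proof elides.
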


\begin{proof} 
\noindent In order to find saddle-points, we look at zeros of the derivative of $h_x$,
$$
h'_x(w)   =  - \sigma h_x(w)^2  \left( 1-w \right)^{\alpha+\beta -1} \left(  x^{-\alpha} -1 + \left( 1-w \right)^{\alpha} \right) \, .
$$
\end{proof}

\begin{remark}
In our case, when  $x \neq 1$, we have $\alpha +1$ different saddle-points: one ``huge'' at $w=1$, and $\alpha$ secondary saddle-points depending on the $x$ value. When $x$ is in a neighborhood of $1$, the $\alpha$ saddle-points will merge at the point $w=1$. We have to deal with $\alpha +1$ coalescing saddle-points. It differs from usual saddle-points method because it is no longer possible to treat each saddle-point independently.
We need to consider all saddle-points together in order to find the asymptotic expansion.\end{remark}

\begin{figure}[htbp]
\begin{center}
\includegraphics[height=5.5cm]{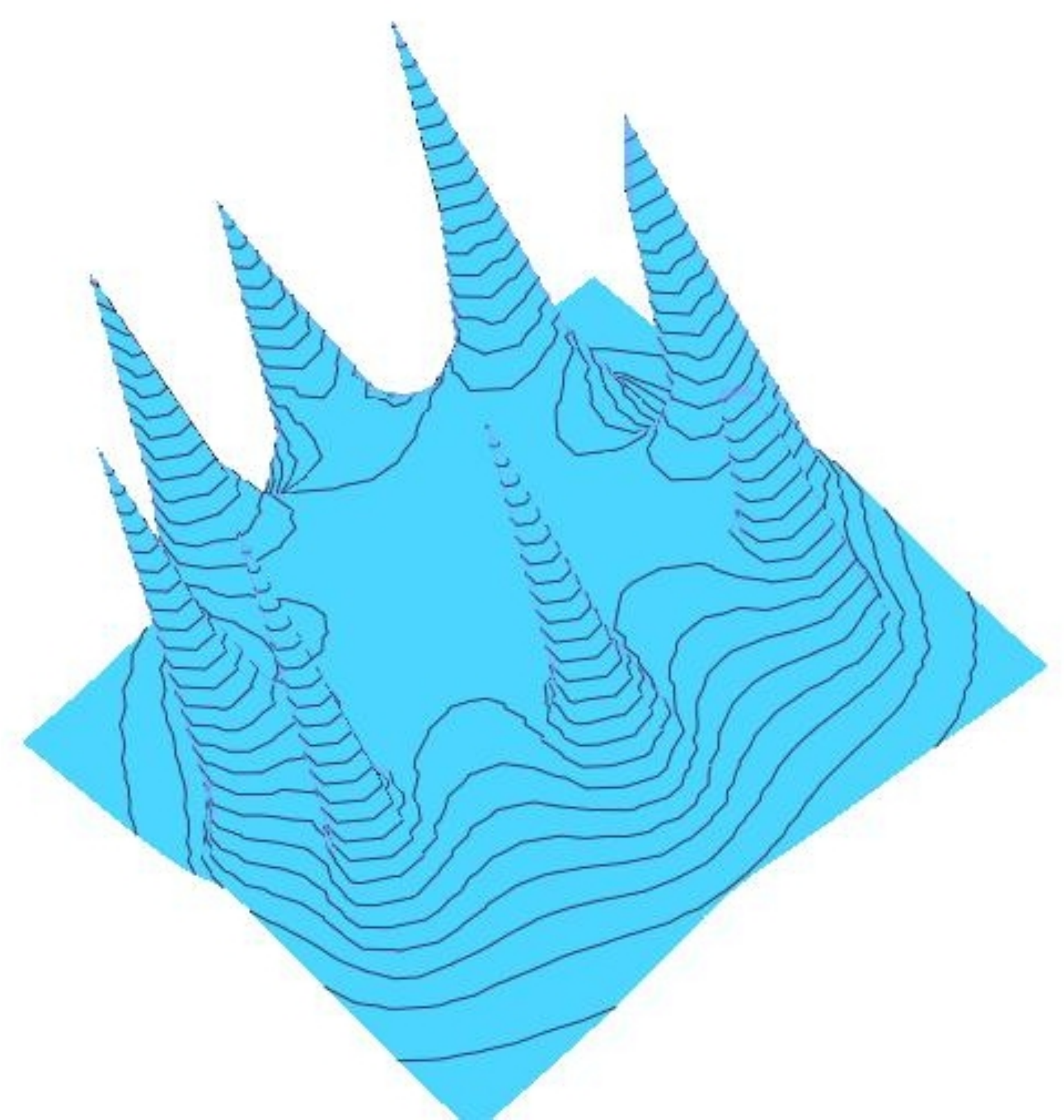}
\includegraphics[height=4cm]{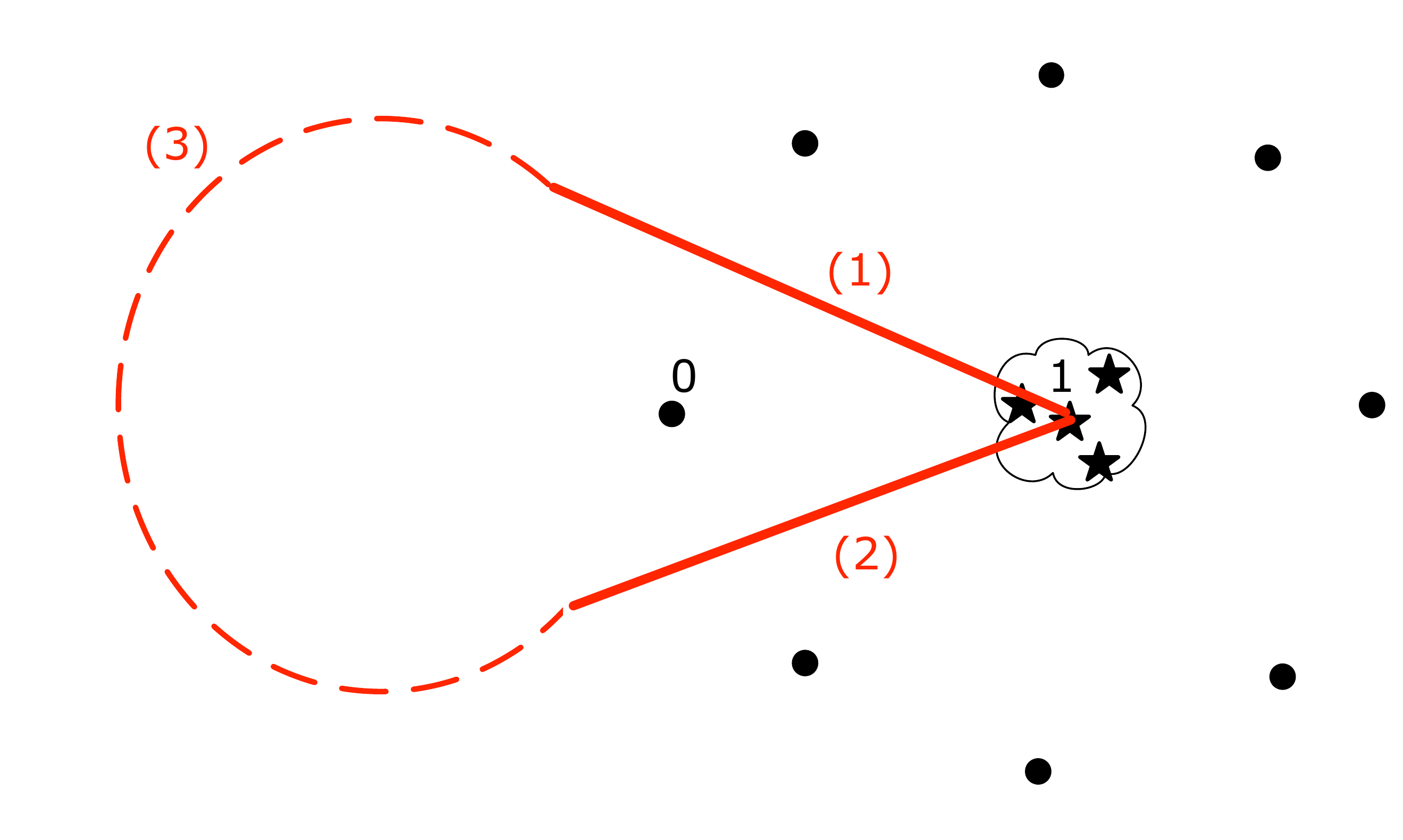}
\caption{{\it Left -} Graph of $|h_x(w)|$ for the urn $\mathcal{A}(3,2)$. There are $\sigma=8$ poles, and  4 saddle-points at point $1$ and at $1 - \left(1-x^{-3}\right)^{1/3}$.
{\it Right -} Multiple saddle-points for $h_x(w)$. The {\it peaks} (poles) corresponds to  the zeros of the denominator of $h_x(w)$. The {\it stars} represent the different saddle-points. We illustrate {\it in red} the integration contour around 0.}
\label{sadd_multiple}
\end{center}
\end{figure}

We expect a Gaussian law (thanks to probabilistic results), so we use a renormalisation for $x$ of order $n^{-1/2}$. 
For the renormalisation of $w$, we use the same scale as previously in the particular case $x=1$ (see Appendix \ref{saddle-point}). So the normalizing factor is~$n^{-1/\sigma}$.

The general idea for the choice of the contour is the same as in the previous simple case $x=1$. We will use two segments beginning in the saddle-point area in $w=1$ and they have to follow the descents of the surface. This descent has to go between the pole $0$ and the first closest pole. The segment will make an angle of $\frac{\pi (\sigma-1)}{\sigma} = \pi - \frac{1}{2}\frac{2 \pi}{\sigma} $, as we can see on the right part of Fig.~\ref{sadd_multiple}.

\begin{definition}The integration contour is described in three parts.
\noindent We use the following parametrization for the segment $\mathcal{C}_1$. For $t \in [0,\, n^2]$,
\begin{align} 
w & = 1 + \left(\frac{t}{n}\right)^{1/\sigma} e^{i \pi \frac{\sigma -1}{\sigma}} \, ,\\
x & = e^{i u /\sqrt{n}} \, .
\end{align}
The same parametrization is used for the segment $\mathcal{C}_2$, with an angle of $-\pi\, \frac{\sigma -1}{\sigma}$.
The circle part $\mathcal{C}_3$ completes the contour.
\end{definition}
\begin{proposition}\label{hx_exp}
There is an asymptotic expansion in $n$ for $h_x(w)^n$:
$$ h_x(w)^n = e^{ \mu  i u \sqrt{n} - \frac{\nu^2}{2} u^2 }  e^{-t} \exp \left(  K t^{\frac{\alpha+\beta}{(2\alpha+\beta)}} u n^{\frac{- \beta}{ 2 (2\alpha+\beta)}}  + O\left(  n^{-1/2}  \left( i u^3 + u t\right) \right)  \right)  ,$$
with $$ \mu = \frac{\alpha (2\alpha+\beta)}{\alpha+\beta} \, , \qquad  \nu^2   = \frac{\alpha^3(2\alpha+\beta)}{\left( \alpha + \beta \right)^2} \qquad \text{and}\qquad x=e^{i u / \sqrt{n}} \, .$$ 
\end{proposition}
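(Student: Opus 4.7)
The plan is to substitute the two contour parametrizations directly into the closed form of $h_x$ given by Lemma~\ref{yn_exp}, and then to expand $n \log h_x(w)$ to the precision required, carefully tracking the scales in $u$, $t$, and $n$.

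First I would rewrite
$$h_x(w)^{-1} = 1 + c_x\bigl(1 - (1-w)^{\alpha+\beta}\bigr) - (1-w)^{\sigma}, \qquad c_x := \frac{\sigma(x^{-\alpha}-1)}{\alpha+\beta},$$
and evaluate the two powers of $1-w$ \emph{exactly} on the contour $\mathcal{C}_1$. A direct check gives $(1-w)^{\sigma} = -t/n$, and tracking the argument carefully yields $(1-w)^{\alpha+\beta} = -(t/n)^{(\alpha+\beta)/\sigma} e^{i\pi\alpha/\sigma}$. Hence
$$h_x(w)^{-1} = 1 + c_x + c_x\,(t/n)^{(\alpha+\beta)/\sigma} e^{i\pi\alpha/\sigma} + \frac{t}{n}.$$

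Next I would Taylor-expand $c_x$ from $x = e^{iu/\sqrt n}$, giving $c_x = -\tfrac{i\alpha\sigma u}{(\alpha+\beta)\sqrt n} - \tfrac{\alpha^2\sigma u^2}{2(\alpha+\beta)n} + O(u^3 n^{-3/2})$, and apply $n\log(1+E) = nE - nE^2/2 + O(nE^3)$ with $E$ denoting everything beyond the leading $1$. Four surviving contributions arise. The $u$-linear part of $-nc_x$ produces $\mu\,iu\sqrt{n}$. The $t/n$ piece contributes $-t$. The mixed factor $-nc_x\,(t/n)^{(\alpha+\beta)/\sigma} e^{i\pi\alpha/\sigma}$ scales as $u\,t^{(\alpha+\beta)/\sigma} n^{\alpha/\sigma-1/2} = u\,t^{(\alpha+\beta)/\sigma} n^{-\beta/(2\sigma)}$, yielding the $K\,t^{(\alpha+\beta)/\sigma} u\,n^{-\beta/(2\sigma)}$ term. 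Finally, the Gaussian $-\nu^2 u^2/2$ emerges from the partial \emph{cancellation} between the subleading piece of $-nc_x$, namely $+\alpha^2\sigma u^2/(2(\alpha+\beta))$, and the leading piece of $nc_x^2/2$, namely $-\alpha^2\sigma^2 u^2/(2(\alpha+\beta)^2)$; their sum is $-\alpha^3\sigma u^2/(2(\alpha+\beta)^2) = -\nu^2 u^2/2$, matching the target Gaussian. All that remains is to collect the errors: the cubic in $u$ from expanding $c_x$ gives $O(u^3 n^{-1/2})$, the cross term $nc_x\cdot(t/n)=c_x t$ gives $O(u t n^{-1/2})$, and the higher-order $nE^3$ together with $t^2/n$ are absorbed into the same bound uniformly for $t \in [0,n^2]$. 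Exponentiating then yields the stated formula.

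The main obstacle will be the careful bookkeeping of three simultaneous scales: $u$ is $O(1)$ but multiplied by $\sqrt n$, $t$ ranges up to $n^2$, and the anomalous intermediate power $n^{-\beta/(2\sigma)}$ sits between them. Verifying that every remainder genuinely lies in $O(n^{-1/2}(u^3 + ut))$ — and not in some uncontrolled $n^{-1/2} t^k$ with $k > 1$ coming from iterated cross-products of $(t/n)^{(\alpha+\beta)/\sigma}$ with itself — is the delicate check, and is why the expansion of $n\log(1+E)$ must be pushed to order two (but not further) in a uniform way in $t$. The analytic miracle in the process is precisely the cancellation producing $\nu^2 = \alpha^3\sigma/(\alpha+\beta)^2$ from two separate second-order pieces of the log-expansion, which is what ensures the saddle-point analysis reproduces the Gaussian predicted on probabilistic grounds by Janson and Smythe.
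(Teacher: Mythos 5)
The paper states Proposition~\ref{hx_exp} with no proof block, so your blind reconstruction is filling an actual gap rather than being compared against a written argument. The route you take — substitute the $\mathcal{C}_1$ parametrization into the closed form from Lemma~\ref{yn_exp}, then Taylor-expand $n\log h_x(w)$ in $E$ — is the natural one and is clearly what the author intended. Your bookkeeping is correct: $(1-w)^\sigma = -t/n$ and $(1-w)^{\alpha+\beta} = -(t/n)^{(\alpha+\beta)/\sigma}e^{i\pi\alpha/\sigma}$ follow from the chosen angle $\pi(\sigma-1)/\sigma$; the expansion of $c_x$ is right; the exponent of the anomalous term falls out from $\tfrac12-\tfrac{\alpha+\beta}{\sigma} = -\tfrac{\beta}{2\sigma}$; and the identification of $\nu^2$ via the cancellation
$\tfrac{\alpha^2\sigma}{2(\alpha+\beta)} - \tfrac{\alpha^2\sigma^2}{2(\alpha+\beta)^2} = -\tfrac{\alpha^3\sigma}{2(\alpha+\beta)^2}$
is exactly the ``analytic miracle'' that makes the saddle-point argument reproduce Janson's variance. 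Nicely done.

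One overstatement should be fixed. You claim the error is absorbed into $O\bigl(n^{-1/2}(u^3+ut)\bigr)$ ``uniformly for $t\in[0,n^2]$''. That is not true: the $(t/n)^2$ piece of $E^2$ contributes $t^2/(2n)$ to $n\log h_x$, and $t^2/n = O(n^{-1/2}ut)$ only when $t = O(u\sqrt n)$. At $t\sim n^2$ it is catastrophically larger. This does not endanger the paper, because the expansion is applied only after Lemma~\ref{tail_cut} has truncated the integration to $t\in[0,\,n^{1/(\sigma+1)}]$, on which range $t^2/n$, the $u^2\,t^{(\alpha+\beta)/\sigma}n^{-(\alpha+\beta)/\sigma}$ cross term, and all higher $nE^3$ pieces are genuinely $o$ of the retained terms. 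So you should state the proposition's validity on that truncated range (or, equivalently, restrict to $t = O(\sqrt n)$), rather than claim it for the full length of $\mathcal{C}_1$. This also clarifies the paper's own cryptic remark that the expansion is ``valid for all $t$ and $u$'': what is really meant is that the formal expansion holds with the stated error term, and that the error is small precisely on the post-truncation range where it is used.
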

\begin{remark} Notice that this expansion is valid for all $t$ and $u$. Besides, we obtain the same expansion using for $x$ the expression
$ x =  1 + i u n^{-1/2}  - u^2 n^{-1} $.
Then, it will be possible to use this expansion for all $|u| < 1$, then we will have an expression for $|x| < \frac{1}{\sqrt{n}}$ uniformly, which is a necessary hypothesis for Hwang's theorem on limit law.  We will use $x$ real near from $1$ for large  deviations theorem.
The expression with $x=e^{i u/\sqrt{n}}$ implies $|x| =1$ for the local limit law theorem.
\end{remark}
\begin{lemma}\label{tail_cut} [Neglect the tails]
The $\mathcal{C}_1$-contribution is asymptotically equivalent on both intervals, for $t \in [0, \, n^2]$ and for  $t \in \left[0, \, n^{\frac{1}{\sigma +1}}\right]$. The error term is $O \left( \exp \left( - n^{\frac{1}{\sigma +1}}\right)\right)$, so it is exponentially negligible.
\end{lemma}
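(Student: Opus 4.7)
The plan is to bound the tail portion of the $\mathcal{C}_1$-integral, i.e.\ the contribution from $t \in [n^{1/(\sigma+1)}, n^2]$, by invoking Proposition~\ref{hx_exp} as a pointwise upper bound rather than as a local approximation. The exponential decay $e^{-t}$ inherent to the saddle-point geometry will provide the announced super-polynomial smallness.

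First, I would write the integrand on $\mathcal{C}_1$ explicitly in the variable $t$. From the parametrization one has $|1-w|^{\sigma} = t/n$ and $|dw/dt| = (\sigma n)^{-1}(t/n)^{(1-\sigma)/\sigma}$; substituting into $a_x(w) = (1-w)^{\alpha+\beta-2}\bigl(x^{-\alpha}-1+(1-w)^{\alpha}\bigr)$ gives an elementary algebraic bound of the form $|a_x(w)\,dw| \leq C\,t^{-1/\sigma}\,n^{-(\sigma-1)/\sigma}\,dt$, uniformly in the regime $|u| = O(1)$.

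Next, Proposition~\ref{hx_exp} bounds $|h_x(w)^{n+1}|$ by a bounded prefactor times $e^{-t}\cdot \exp\!\bigl(|K t^{(\alpha+\beta)/\sigma} u\, n^{-\beta/(2\sigma)}| + O(n^{-1/2}(|u|^3 + |u|t))\bigr)$. Since the exponent $(\alpha+\beta)/\sigma < 1$, the first correction grows strictly sublinearly in $t$; and since $|u|$ is bounded while $n^{-\beta/(2\sigma)}, n^{-1/2} \to 0$, for any fixed $\varepsilon \in (0,1)$ the whole correction is at most $\varepsilon t$ for every $t$ in the tail once $n$ is large enough. Hence $|h_x(w)^n| \leq C\,e^{-(1-\varepsilon)t}$ throughout $[n^{1/(\sigma+1)}, n^2]$, and integrating the combined estimate gives
\[
\int_{n^{1/(\sigma+1)}}^{n^2} C\,t^{-1/\sigma}\,n^{-(\sigma-1)/\sigma}\,e^{-(1-\varepsilon)t}\,dt \;\leq\; C'\,e^{-(1-\varepsilon)n^{1/(\sigma+1)}},
\]
which is $O(\exp(-n^{1/(\sigma+1)}))$ after routine adjustment of constants (for instance by enlarging the cutoff by a fixed factor).

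The main obstacle will be the careful verification that the $t^{(\alpha+\beta)/\sigma}$ correction remains subordinate to $t$ across the \emph{full} range $[n^{1/(\sigma+1)}, n^2]$, since at $t=n^2$ the bare factor $t^{(\alpha+\beta)/\sigma}$ can be as large as $n^{2(\alpha+\beta)/\sigma}$. The cleanest way is to split the interval into a moderate regime (where one checks directly that the exponent is $o(t)$) and a very large regime (where $t$ dominates any polynomial in $n$ by brute comparison of exponents); explicit bookkeeping of the powers of $n$ and $t$ appearing in the correction then closes the estimate.
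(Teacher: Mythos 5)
Your strategy is to push the asymptotic expansion of Proposition~\ref{hx_exp} as a pointwise upper bound over the entire tail $[n^{1/(\sigma+1)},\,n^2]$, and to conclude via the bound $|h_x(w)^n|\leq C\,e^{-(1-\varepsilon)t}$. This is where the argument breaks: that expansion is a Taylor-type expansion of $\log h_x$ around the saddle $w=1$, and the error term $O\bigl(n^{-1/2}(u^3+ut)\bigr)$ is only a valid control while $w$ stays in a neighborhood of the saddle. On $\mathcal{C}_1$ one has $|w-1|=(t/n)^{1/\sigma}$, so as soon as $t\gtrsim n$ the point $w$ has left any fixed neighborhood of $1$ and the logarithmic expansion can no longer be trusted. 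In fact, the claimed bound is demonstrably false at the far end of the tail: for $t$ large, the denominator of $h_x$ is dominated by $-(1-w)^\sigma=t/n$, so $|h_x(w)|\asymp n/t$ and hence $|h_x(w)|^n\asymp (n/t)^n$. At $t=n^2$ this is $n^{-n}=e^{-n\log n}$, which is \emph{enormously larger} than $e^{-(1-\varepsilon)n^2}$. The correction terms in Proposition~\ref{hx_exp}, of order $\exp\bigl(O(n^{-1/2}ut)\bigr)=\exp\bigl(O(n^{3/2})\bigr)$ at $t=n^2$, cannot account for a discrepancy of order $\exp(n^2-n\log n)$, so the proposition's error bound simply does not hold that far out. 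Your final paragraph worries about the size of the $t^{(\alpha+\beta)/\sigma}$ correction, but that is not the real obstacle; the obstacle is that the whole local expansion has lost meaning.

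The paper takes a different and more robust route: it shows that along $\mathcal{C}_1$ the modulus $|h_x(w(t))|$ is \emph{strictly decreasing} for $t>n^{1/(\sigma+1)}$. Monotonicity gives the tail bound directly, without any appeal to the central expansion outside its region of validity: the tail integrand is dominated by its value at the cutoff $t=n^{1/(\sigma+1)}$, where $|h_x|^{n+1}$ is already of order $\exp\bigl(-n^{1/(\sigma+1)}\bigr)$, and multiplying by the polynomial length of the interval keeps the tail exponentially negligible. This is the standard ``tail estimate by monotonicity'' step of the saddle-point method, and it is what your proof needs in place of the global use of Proposition~\ref{hx_exp}. To repair your argument you should discard the pointwise bound coming from the central expansion on $t\gtrsim n$ and instead verify monotonicity of $t\mapsto |h_x(w(t))|$ beyond the cutoff (for instance from the explicit formula for $h_x$ on the parametrized path, as is done for $x=1$ in the appendix), then bound the tail by the cutoff value times the interval length.
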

\begin{proof}
On the path $\mathcal{C}_1$, the function $h_x(t)$ is strictly decreasing for \mbox{$t>  n^{\frac{1}{\sigma+1}}$}. \qed
\end{proof}

\begin{lemma}\label{C1_part} [Central approximation]
The $\mathcal{C}_1$-contribution can be written
$$
\frac{\sigma^n}{2 i \pi} e^{i \pi /\sigma} n^{- \frac{\sigma -1}{\sigma}}  \exp \left( \mu  i u \sqrt{n} - \frac{\nu^2}{2} u^2 \right) \int_0^{\frac{1}{\sigma +1}}  t^{-1/\sigma} e^{-t} \left( 1 + t^{\frac{\alpha +\beta}{\sigma}}O \left( u n^{-\frac{\beta}{2\sigma}} \right) \right)  \text{\emph{d}}t .
$$
\end{lemma}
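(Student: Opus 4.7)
The plan is to combine the contour integral of Lemma~\ref{yn_exp}, the parametrisation of $\mathcal{C}_1$ given above, the expansion of $h_x(w)^n$ from Proposition~\ref{hx_exp}, and the truncation of Lemma~\ref{tail_cut}. First I would substitute $w = 1 + (t/n)^{1/\sigma} e^{i \pi (\sigma-1)/\sigma}$ in
$$[z^n]H(x,z) = \frac{\sigma^{n+1}}{2 i \pi} \int_{\mathcal{C}_1} a_x(w)\, h_x(w)^{n+1} \, dw,$$
which produces $dw = \frac{1}{\sigma\, n^{1/\sigma}}\, t^{1/\sigma - 1}\, e^{i \pi (\sigma-1)/\sigma}\, dt$ and, on the principal branch, $1-w = (t/n)^{1/\sigma} e^{-i\pi/\sigma}$.

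Next I would expand
$$a_x(w) = (1-w)^{\sigma-2}\bigl[1 + (x^{-\alpha}-1)(1-w)^{-\alpha}\bigr].$$
Its leading factor yields $(t/n)^{(\sigma-2)/\sigma}\, e^{-i\pi(\sigma-2)/\sigma}$, and multiplying by $dw$ collapses both the scaling and the angles into
$$a_x(w)\, dw = \frac{1}{\sigma}\, e^{i\pi/\sigma}\, t^{-1/\sigma}\, n^{-(\sigma-1)/\sigma}\, (1 + r_a)\, dt,$$
where $r_a = O\bigl(u\, t^{-\alpha/\sigma}\, n^{-\beta/(2\sigma)}\bigr)$ comes from the bracketed correction.

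Then I would insert the asymptotics $h_x(w)^{n+1} = h_x(w)\cdot h_x(w)^n$ from Proposition~\ref{hx_exp}, using the fact that $h_x(w) = 1 + O(n^{-1/\sigma})$ uniformly on $\mathcal{C}_1$. This pulls out the Gaussian factor $\exp\bigl(\mu\, i u \sqrt{n} - \tfrac{\nu^2}{2} u^2\bigr)$, the dissipative $e^{-t}$, and the error $t^{(\alpha+\beta)/\sigma}\, O\bigl(u\, n^{-\beta/(2\sigma)}\bigr)$. Absorbing one factor of $\sigma$ from $\sigma^{n+1}$ with the $1/\sigma$ from $dw$, and invoking Lemma~\ref{tail_cut} to truncate $t$ to $[0,\, n^{1/(\sigma+1)}]$ at exponentially small cost, yields exactly the stated expression.

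The main obstacle will be the bookkeeping of the angle cancellation $e^{i\pi(\sigma-1)/\sigma} \cdot e^{-i\pi(\sigma-2)/\sigma} = e^{i\pi/\sigma}$ on a consistent branch of the multi-valued $(1-w)^{\sigma-2}$, together with the verification that the correction $r_a$ from $a_x(w)$, although singular at $t=0$, does not harm the integrability of $t^{-1/\sigma} e^{-t}$ and is indeed dominated by the error $t^{(\alpha+\beta)/\sigma}\, O\bigl(u\, n^{-\beta/(2\sigma)}\bigr)$ advertised in the statement once both corrections are assembled inside the same integrand.
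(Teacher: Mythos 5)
Your route is the paper's route: change variables along $\mathcal{C}_1$, split off the phase and scale factors from $a_x(w)\,dw$, feed in the expansion of $h_x(w)^n$ from Proposition~\ref{hx_exp}, and truncate with Lemma~\ref{tail_cut}. The paper's proof is just two lines (it only records $a_x(t)\sim (t/n)^{(\sigma-2)/\sigma}e^{-i\pi(\sigma-2)/\sigma}$ as an aside), and what you have written fills in exactly the omitted bookkeeping. Your angle computation $e^{i\pi(\sigma-1)/\sigma}\cdot e^{-i\pi(\sigma-2)/\sigma}=e^{i\pi/\sigma}$, the scaling $t^{-1/\sigma}n^{-(\sigma-1)/\sigma}$, and the absorption of one $\sigma$ from $\sigma^{n+1}$ are all correct, as is the identification $r_a = O\bigl(u\,t^{-\alpha/\sigma}\,n^{-\beta/(2\sigma)}\bigr)$ (using $\alpha/\sigma - 1/2 = -\beta/(2\sigma)$).

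The one point that needs sharpening is your concluding sentence. It is \emph{not} true that $r_a$ is dominated pointwise by the advertised error $t^{(\alpha+\beta)/\sigma}\,O\bigl(u\,n^{-\beta/(2\sigma)}\bigr)$: the two have the same $n$-order but opposite $t$-behaviour, and near $t=0$ the factor $t^{-\alpha/\sigma}$ in $r_a$ blows up while $t^{(\alpha+\beta)/\sigma}$ vanishes, so no comparison of the integrands holds. What rescues the argument is the next lemma, not this one. Since $\alpha+1 < \sigma$ (equivalently $\alpha+\beta>1$), the product $t^{-1/\sigma}\,r_a = O\bigl(u\,n^{-\beta/(2\sigma)}\bigr)\,t^{-(\alpha+1)/\sigma}$ is still integrable against $e^{-t}$ on $[0,\infty)$, and its total contribution to the integral is again $O\bigl(u\,n^{-\beta/(2\sigma)}\bigr)$ --- the same relative order as the contribution of the $t^{(\alpha+\beta)/\sigma}$ error. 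So after the integration performed in Lemma~\ref{2-parts} both errors collapse to the single $O\bigl(u\,n^{-\beta/(2\sigma)}\bigr)$, which is all that is used downstream. Strictly speaking the displayed integrand in Lemma~\ref{C1_part} should carry an additional term $t^{-\alpha/\sigma}\,O\bigl(u\,n^{-\beta/(2\sigma)}\bigr)$ if one wants a pointwise statement; the paper's $\sim$ sign is quietly suppressing it. You should state the integrability argument rather than attempt a pointwise domination that cannot hold.
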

\begin{proof}
We use Lemma \ref{tail_cut} to restrict the interval. Then we can apply the asymptotic expansion of Proposition \ref{hx_exp}. Besides, we have to know an asymptotic expansion for $a_x(w)$ on this path, and we have
$$ a_x(t) \sim   \left(\frac{t}{n}\right)^{(\sigma -2)/\sigma}  \, e^{-i \pi(\sigma -2)/\sigma } \, .$$
\end{proof}

\begin{lemma}\label{tail_complete} [Complete the tails]
The integral of $\, t^{-1/\sigma}e^{-t} \, $ on segment $[0, \, n^{\frac{1}{\sigma+1}}]$ is equivalent to the same integral on interval $[0, \, \infty[$. \\ The error term is $O \left( \exp \left( - n^{\frac{1}{\sigma +1}}\right)\right)$, so it is exponentially negligible.
\end{lemma}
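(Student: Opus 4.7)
The plan is to control the tail integral $\int_{n^{1/(\sigma+1)}}^{\infty} t^{-1/\sigma} e^{-t}\,\mathrm{d}t$ and show it is exponentially small, which is essentially a standard incomplete-Gamma estimate. Since $\sigma = 2\alpha+\beta \geq 1$, the integrand is integrable at $0$ (as $1/\sigma < 1$) and the full integral converges:
$$\int_0^{\infty} t^{-1/\sigma} e^{-t}\,\mathrm{d}t = \rmgamma(1 - 1/\sigma).$$
So the lemma will follow once the tail bound is established.

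For the tail bound, I would choose $n$ large enough that $T := n^{1/(\sigma+1)} \geq 1$. On the range $t \geq T \geq 1$, one has $t^{-1/\sigma} \leq 1$, so
$$\int_T^{\infty} t^{-1/\sigma} e^{-t}\,\mathrm{d}t \;\leq\; \int_T^{\infty} e^{-t}\,\mathrm{d}t \;=\; e^{-T} \;=\; \exp\!\bigl(-n^{1/(\sigma+1)}\bigr),$$
which is exactly the announced error term. Combining this with the convergence of the full integral yields
$$\int_0^{n^{1/(\sigma+1)}} t^{-1/\sigma} e^{-t}\,\mathrm{d}t \;=\; \rmgamma(1 - 1/\sigma) \,+\, O\!\bigl(\exp(-n^{1/(\sigma+1)})\bigr),$$
proving the equivalence with the integral on $[0, \infty)$.

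There is no substantial obstacle here: the exponent $1/(\sigma+1) > 0$ guarantees that $n^{1/(\sigma+1)} \to \infty$, producing stretched-exponential decay that beats any polynomial factor one might have picked up. The only minor point to verify is the integrability at the origin, which holds because $\sigma \geq 1$ (strictly, $\sigma = 2\alpha+\beta$ with $\alpha,\beta > 0$ integers gives $\sigma \geq 3$), so that $t^{-1/\sigma}$ is harmless near $t = 0$ and the reference integral $\rmgamma(1-1/\sigma)$ is a finite positive constant.
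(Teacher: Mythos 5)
Your proof is correct and is exactly the standard incomplete-Gamma tail estimate one would give here; the paper actually states this lemma without a proof (it is treated as routine), and the analogous computation in the appendix for the case $x=1$ invokes the same $\rmgamma\left(\frac{\sigma-1}{\sigma}\right)$ limit. Your bound $t^{-1/\sigma}\leq 1$ for $t\geq 1$, giving $\int_T^{\infty}t^{-1/\sigma}e^{-t}\,\mathrm{d}t \leq e^{-T}$, is precisely the right elementary step, and your remark that $\sigma = 2\alpha+\beta \geq 3$ ensures integrability at the origin is correct.
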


\begin{lemma}\label{2-parts}
The $\mathcal{C}_1$ and $\mathcal{C}_2$ contribution can be written
\begin{align}
\frac{\sigma^n}{\rmgamma\left( \frac{1}{\sigma}\right)}n^{\frac{1}{\sigma}-1}   \exp \left( \mu  i u \sqrt{n} - \frac{\nu^2}{2} u^2 \right) \left( 1 + O\left( u n^{-\frac{\beta}{2\sigma}}\right)\right) \, .
\end{align}
\end{lemma}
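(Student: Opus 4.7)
The plan is to evaluate the $\mathcal{C}_1$ and $\mathcal{C}_2$ contributions separately and then combine them via Euler's reflection formula for the Gamma function. First, for $\mathcal{C}_1$, I would invoke Lemma~\ref{C1_part} to reduce the contour integral to a real integral in $t$, and then apply Lemma~\ref{tail_complete} to extend the range from $[0,\, n^{1/(\sigma+1)}]$ out to $[0,\,\infty)$ with exponentially negligible error. The leading integral is then $\int_0^\infty t^{-1/\sigma} e^{-t}\,\mathrm{d}t = \rmgamma(1-1/\sigma)$. The error term in Lemma~\ref{C1_part} carries the extra factor $t^{(\alpha+\beta)/\sigma}$, which only shifts the Gamma exponent to the strictly positive value $(\alpha+\beta-1+\sigma)/\sigma$, so the relative error $O(un^{-\beta/(2\sigma)})$ survives integration intact. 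This yields
$$\mathcal{C}_1 = \frac{\sigma^n}{2i\pi}\, e^{i\pi/\sigma}\, n^{1/\sigma - 1}\, \rmgamma(1-1/\sigma)\, \exp\!\left(\mu\, iu\sqrt{n} - \tfrac{\nu^2}{2} u^2\right)\bigl(1 + O(un^{-\beta/(2\sigma)})\bigr).$$

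I would then treat $\mathcal{C}_2$ in the same way, with the angle $\pi(\sigma-1)/\sigma$ replaced by its opposite. In the asymptotic computation of $a_x(w)\,\mathrm{d}w$ underlying Lemma~\ref{C1_part}, the phase $e^{i\pi/\sigma}$ (arising from $\pi(\sigma-1)/\sigma - \pi(\sigma-2)/\sigma$) is accordingly replaced by $e^{-i\pi/\sigma}$. The subtle point is the orientation: in the closed three-part loop around $0$, the parametrizations of $\mathcal{C}_1$ and $\mathcal{C}_2$ both run outward from $w=1$, so one of them must be traversed against the direction of the loop, contributing an overall minus sign. Hence $\mathcal{C}_2$ equals the same expression as $\mathcal{C}_1$ with $e^{i\pi/\sigma}$ replaced by $-e^{-i\pi/\sigma}$.

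Adding the two contributions, the factor $e^{i\pi/\sigma} - e^{-i\pi/\sigma} = 2i\sin(\pi/\sigma)$ cancels the $2i$ in the Cauchy prefactor $1/(2i\pi)$. Applying Euler's reflection identity $\rmgamma(1/\sigma)\,\rmgamma(1-1/\sigma) = \pi/\sin(\pi/\sigma)$, the combination $\sin(\pi/\sigma)\,\rmgamma(1-1/\sigma)/\pi$ collapses to $1/\rmgamma(1/\sigma)$, producing exactly the announced
$$\frac{\sigma^n}{\rmgamma(1/\sigma)}\, n^{1/\sigma - 1}\, \exp\!\left(\mu\, iu\sqrt{n} - \tfrac{\nu^2}{2} u^2\right)\bigl(1 + O(un^{-\beta/(2\sigma)})\bigr).$$

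The main obstacle is not the analytic calculation but the bookkeeping of the orientation of $\mathcal{C}_2$ relative to $\mathcal{C}_1$ in the closed contour: only with the correct sign does one obtain the cancellation $e^{i\pi/\sigma}-e^{-i\pi/\sigma}$, which pairs with the reflection formula to produce $1/\rmgamma(1/\sigma)$, whereas a wrong sign would produce $\cos(\pi/\sigma)$ and spoil the normalization agreed with the known $x=1$ case. The remaining care concerns verifying uniformity in $u$ of the error: one must check that the bound $1 + O(un^{-\beta/(2\sigma)}\, t^{(\alpha+\beta)/\sigma})$ inside the integral, after multiplication by the Gaussian prefactor $\exp(\mu\, iu\sqrt{n} - \nu^2 u^2/2)$, does not degrade because $u$ is allowed to grow (it remains $O(un^{-\beta/(2\sigma)})$ as long as $|u| = o(n^{\beta/(2\sigma)})$, which comfortably covers the ranges listed in the remark after Proposition~\ref{y_expr}).
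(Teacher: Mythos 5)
Your proof is correct and follows essentially the same route as the paper: combine Lemma~\ref{C1_part} with Lemma~\ref{tail_complete} to reduce each segment to a factor $\rmgamma(1-1/\sigma)$, add the symmetric $\mathcal{C}_2$ contribution to produce the $2i\sin(\pi/\sigma)$ factor, and then apply Euler's reflection formula to collapse $\sin(\pi/\sigma)\rmgamma(1-1/\sigma)/\pi$ to $1/\rmgamma(1/\sigma)$. The paper's own proof is a terse three-line version of exactly this argument; your added remarks on the orientation sign for $\mathcal{C}_2$ and on the uniformity of the error in $u$ are correct filling-in of details the paper leaves implicit.
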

\begin{proof}
The work on segment $\mathcal{C}_1$ is the same for segment $\mathcal{C}_2$. Adding the two contribution, we get a $\left(e^{i \pi / \sigma} - e^{-i \pi / \sigma}\right)$ factor. Thanks to Lemma \ref{tail_complete}, the integral part of calculus reduces to the simple Gamma factor $\rmgamma \left( \frac{\sigma -1}{\sigma}\right)$. \qed
\end{proof}

\begin{lemma}\label{3-part}
The circle part $\mathcal{C}_3$ is exponentially negligible.
\end{lemma}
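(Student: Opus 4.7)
The plan is to bound the integrand on $\mathcal{C}_3$ in absolute value and multiply by the arc length, showing that the resulting bound is super-exponentially smaller than the main contribution $\sigma^n n^{1/\sigma-1}/\rmgamma(1/\sigma)$ obtained in Lemma~\ref{2-parts}.

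First I would make explicit what $\mathcal{C}_3$ is: it is the arc of circle centered at $w=1$, of radius $R_n = n^{1/\sigma}$, joining the two endpoints $w_\pm = 1 + R_n\, e^{\pm i \pi (\sigma-1)/\sigma}$ of $\mathcal{C}_1$ and $\mathcal{C}_2$, and going through the half-plane containing $0$ (so that, together with the two segments, the whole closed contour encircles the pole $w=0$ but none of the other poles, which all lie in a bounded region independent of $n$; this uses the fact from the saddle-point proposition that the $\sigma-1$ non-zero poles of $h_x$ are fixed, and for $x$ close to $1$ they stay in a compact neighbourhood). Thus for $n$ large, $\mathcal{C}_3$ stays at distance at least $R_n/2$ from every pole, and its arc length is at most $2\pi R_n$.

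Next I would estimate $|h_x(w)|$ and $|a_x(w)|$ on $\mathcal{C}_3$. Looking at the formulas in Lemma~\ref{yn_exp}, the denominator of $h_x(w)$ is a polynomial in $(1-w)$ of degree $\sigma = 2\alpha+\beta$ whose dominant term is $-(1-w)^{2\alpha+\beta}$ as $|1-w|\to\infty$, uniformly for $x$ in a small neighbourhood of $1$. Hence for $n$ large,
\begin{equation*}
|h_x(w)| \leq \frac{C_1}{|1-w|^{\sigma}} = \frac{C_1}{R_n^{\sigma}} = \frac{C_1}{n}, \qquad |a_x(w)| \leq C_2\, |1-w|^{\sigma-2} = C_2\, R_n^{(\sigma-2)/\sigma},
\end{equation*}
for some constants $C_1,C_2$ independent of $n$ and $u$.

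Putting these together, the contribution of $\mathcal{C}_3$ to (\ref{lem_yn}) is bounded by
\begin{equation*}
\left| \frac{\sigma^{n+1}}{2 i\pi} \int_{\mathcal{C}_3} a_x(w)\, h_x(w)^{n+1}\,\mathrm{d}w \right|
\leq \frac{\sigma^{n+1}}{2\pi}\cdot 2\pi R_n \cdot C_2 R_n^{(\sigma-2)/\sigma} \cdot \frac{C_1^{n+1}}{n^{n+1}} = O\!\left( \frac{(C_1\sigma)^{n+1}}{n^{n+1-(\sigma-1)/\sigma}}\right).
\end{equation*}
Dividing by the $\mathcal{C}_1+\mathcal{C}_2$ contribution $\sigma^n\, n^{1/\sigma-1}/\rmgamma(1/\sigma)$ of Lemma~\ref{2-parts} gives a ratio of order $C_1^{n+1}\, \sigma / n^n \cdot n^{O(1)}$, which decays faster than any exponential. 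This is the desired negligibility.

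The only delicate point is the geometric verification that the circular arc $\mathcal{C}_3$, together with the two segments, winds exactly once around $w=0$ and avoids all other poles of $h_x(w)$ uniformly in $x$ close to~$1$; once this is established the analytic bounds above are routine. The argument itself is standard for saddle-point contours at large radius, where the integrand decays like $|w|^{-\sigma n}$.
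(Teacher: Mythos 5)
Your argument is correct and follows essentially the same route as the paper's (carried out explicitly for the $x=1$ template in the Appendix): bound $|h_x(w)| = O\bigl(|1-w|^{-\sigma}\bigr)$ and $|a_x(w)| = O\bigl(|1-w|^{\sigma-2}\bigr)$ uniformly on the connecting arc of radius $R_n = n^{1/\sigma}$, multiply by the arc length, and observe that the resulting $O\bigl(\sigma^{n+1} R_n^{\sigma-1} (C/n)^{n+1}\bigr)$ is super-exponentially small against the $\sigma^n n^{1/\sigma - 1}$ contribution of $\mathcal{C}_1\cup\mathcal{C}_2$. One harmless notational slip: $C_2 |1-w|^{\sigma-2}$ equals $C_2 R_n^{\sigma-2} = C_2\, n^{(\sigma-2)/\sigma}$, not $C_2 R_n^{(\sigma-2)/\sigma}$, but you carry the correct power of $n$ into the final bound, so the conclusion stands.
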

Adding all the contributions, thanks to Lemmas \ref{2-parts} and \ref{3-part}, we obtain a global expression of $y_n(x)$,
\begin{align} y_n(x) =  \frac{\sigma^n}{\rmgamma\left( \frac{1}{\sigma}\right)}n^{\frac{1}{\sigma}-1}   \exp \left( \mu  i u \sqrt{n} - \frac{\nu^2}{2} u^2 \right) \left( 1 + O\left( u n^{-\frac{\beta}{2\sigma}}\right)\right) \, .
\end{align}

\noindent We can express $p_n(x)$ with $x$ in a neighborhood of $1$ with radius $n^{-1/2}$, and we obtain an expression suited to Quasi-Power Theorems,
\begin{align} p_n(x) = \left( x^{\mu} \exp \left( \frac{\nu^2}{2} \ln(x)^2 \right) \right)^n \left( 1 + O \left(    n^{\frac{- \beta}{2 \sigma}} \right)\right) \, .\end{align}
For the three main theorems, we use this expression which satisfies the hypothesis for applying H.-K.Hwang theorems: Quasi-Power Theorem, Quasi-Power Limit Theorem and Quasi-Power for large deviations (in \cite{FlaSed09}, {\it p.645, 696,~700}).


\section{Conclusion}

Through the use of analytic combinatorics, we obtain precise probability results, including a Gaussian limit law with its rate of convergence, a local limit law and large deviation bounds. For the first time, this kind of probabilistic results on additive balanced urns are obtained from an analytic point of view. As previously thought, analytic combinatorial methods can provide a wealth of valuable information. In addition, our results seem to reinforce the idea of a general theory for additive balanced urns. In this way, the work on urns linked to $k$-trees in \cite{Morcrette10} use an other subclass of additive balanced urns, and similar probabilistic results were obtained. The natural next step is having a complete understanding of the whole class, following ideas of Flajolet et al. in \cite{FlaDuPuy06}.

\subsubsection*{Acknowlegment.}
I warmly thank my advisor and mentor, Philippe Flajolet, for guiding me and introducing me to research and especially to analytic combinatorics. This paper is dedicated to him.
In addition, I'm grateful to J\'er\'emie Lumbroso for his helpful remarks.\\
This work was supported by the ANR project 09 BLAN 0011 Boole  and the ANR~project 10 BLAN 0204 Magnum.

\bibliographystyle{splncs03}

\begin{thebibliography}{}
\providecommand{\url}[1]{\texttt{#1}}
\providecommand{\urlprefix}{URL }

\end{thebibliography}


\begin{thebibliography}{11}

\bibitem
{BagchiPal}
{\sc Bagchi, A., Pal, A.}
Asymptotic Normality in the Generalized P\'olya-Eggenberger Urn Model, with an Application to Computer Data Structures.
{\it SIAM Journal on Algebraic and Discrete Methods} {\bfseries 6}, 
No. 3, 394--405 
(1985)

\bibitem
{BanFlaSchSor01}
{\sc Banderier, C., Flajolet, P., Schaeffer, G., Soria, M.}
Random Maps, Coalescing Saddles, Singularity Analysis, and Airy Phenomena.
{\it Random Structures \& Algorithms} {\bfseries 19},
No. 3/4, 194--246 
(2001)

\bibitem
{ChauPouSah09}
{\sc Chauvin, B., Pouyanne, N., Sahnoun, R.}
Limit Distributions for Large P\'olya Urns. 
{\it Annals of Applied Probability} {\bfseries 21}, 
No. 1, 1--32 
(2011)

\bibitem
{FlaDuPuy06}
{\sc Flajolet, P., Dumas, P., Puyhaubert, V.} 
Some Exactly Solvable Models of Urn Process Theory.
{\it Discrete Mathematics \& Theoretical Computer Science Proceedings} {\bfseries AG}, 
59--118
(2006)

\bibitem
{FlaGaPe05}
{\sc Flajolet, P., Gabarro, J., Pekari, H. }
Analytic Urns.
{\it Annals of Probability} {\bfseries 33}, 
No. 3, 1200--1233 
(2005)

\bibitem
{FlaSed09}
{\sc Flajolet, P., Sedgewick, R. }
{\it Analytic Combinatorics. }
Cambridge University Press  
(2009)

\bibitem
{HwKuPa07}
{\sc Hwang, H.-K., Kuba, M., Panholzer, A. }
Analysis of Some Exactly Solvable Diminishing Urn Models.
{\it In: 19th Formal Power Series and Algebraic Combinatorics, }
Tianjin China 
(2007)

\bibitem
{Jan04}
{\sc Janson, S.}
Functional Limit Theorems for Multitype Branching Processes and Generalized P\'olya Urns. 
{\it Stochastic Processes and their Applications} {\bfseries 110},  
177--245 
(2004)

\bibitem
{Jan05}
{\sc Janson, S.}
Limit Theorems for Triangular Urn Schemes. 
{\it Probability Theory Related Fields} {\bfseries 134}, 
No. 3,  417--452  
(2005)

\bibitem
{Jan08}
{\sc Janson, S. }
Plane Recursive Trees, Stirling Permutations and an Urn Model.
{\it Discrete Mathematics \& Theoretical Computer Science Proceedings} {\bfseries AI},
 541--548 
 (2008)

\bibitem
{JoKo77}
{\sc Johnson, N.~L., Kotz, S. }
{\it Urn Models and Their Application.  }
John Wiley \& Sons 
 (1977)

\bibitem
{Mah03}
{\sc Mahmoud, H. }
P\'olya Urn Models and Connections to Random Trees: A Review.
{\it Journal of the Iranian Statistical Society} {\bfseries 2},  
53--114 
(2003)

\bibitem
{Mah08}
{\sc Mahmoud., H. }
{\it P\'olya Urn Models.}
Chapman-Hall/CRC Press
(2008)

\bibitem
{Morcrette10}
{\sc Morcrette, B.}
Combinatoire analytique et mod\`eles d'urnes, 
{\it {Master's thesis}, MPRI - \'Ecole Normale Sup\'erieure de Cachan, INRIA Rocquencourt }
(2010)

\bibitem
{PanSei10}
{\sc Panholzer, A., Seitz, G.}
Ordered Increasing $k$-trees: Introduction and Analysis of a Preferential Attachment Network Model.
{\it Discrete Mathematics \& Theoretical Computer Science Proceedings} {\bfseries AM}, 
549--564 
(2010)

\bibitem
{Smythe96}
{\sc Smythe, R.}
 Central Limit Theorems for Urn Models.
{\it Stochastic Processes and their Applications} {\bfseries 65}, 
 115--137 
 (1996)


\end{thebibliography}

\appendix
\section{Appendix - Saddle-point method when $x=1$}\label{saddle-point}

We detail here the evaluation of the contour integral $y_n(1)$ for the simple case $x=1$,
\begin{align}
y_n(1) 
= \frac{\sigma^{n+1}}{2 i \pi} \oint \frac{\left( 1-w\right)^{\sigma -2}\text{d}w}{\left( 1 - (1-w)^\sigma\right)^{n+1}} \, .
\end{align}
Then,  $h_1(w) = \left( 1 - (1-w)^\sigma \right)^{-1}$ and $a_1(w) = (1-w)^{\sigma -2}$.
We have now to evaluate the integral. 
This is classical saddle-point method. The derivative $h_1'$ becomes zero only for one value $w=1$. There is only one saddle-point. We will use two segments starting at $w=1$ and descending along the surface, passing between the peaks (singular points of $h_1$). The third part of the contour is a circle to join the two ends of segments.
Figure~\ref{sadd3} illustrates the three parts of the contour for the urn $\mathcal{A}(1,1)$.

\noindent First we look at the parametrization of the two segments:
\begin{itemize}
\item  $\mathcal{C}_1$ :  $ w = 1 + t e^{i \pi\frac{\sigma-1}{\sigma}} $, with $t \in [0, \, L]$,
\item  $\mathcal{C}_2$  : $ w = 1 + t e^{- i \pi \frac{\sigma-1}{\sigma}} $, with $t \in [0, \, L]$.
\end{itemize}
The two cases are similar. For $\mathcal{C}_1$,  the integral rewrites
\begin{align}
\frac{\sigma^{n+1}}{2 i \pi} \left( - e^{-i \pi/\sigma}\right)  \int_0^L - t^{\sigma-2} e^{2i \pi /\sigma} \left( \frac{1}{1+t^\sigma}\right)^{n+1} \text{d}t.
\end{align}
Adding the two contributions of the two paths $\mathcal{C}_1$ and $\mathcal{C}_2$, we get
\begin{align}\frac{\sigma^{n+1}}{\pi} \sin\left( \frac{\pi}{\sigma}\right) \int_0^L \frac{t^{\sigma - 2} \text{d}t}{\left( 1 + t^\sigma \right)^{n+1}}.\end{align}
The expansion $ h_1(t)^n  =  \exp \left( -n t^\sigma + O \left( nt^{2 \sigma}\right)  \right)$ leads us to change variable with $u = n t^\sigma$. Thus, the integral rewrites
 \begin{align}  \int_0^L \frac{t^{\sigma - 2} \text{d}t}{\left( 1 + t^\sigma \right)^{n+1}}   =  \frac{n^{- \frac{\sigma - 1}{\sigma}}}{\sigma} \int_0^{nL^\sigma} u^{-1/\sigma}  \exp  \left( -u  + O \left( u^2 n^{-1} \right)  \right) \text{d}u \, .
 \end{align}
We have to adjust the parameter $L$ which is the length of each segment. In order to use the expansion approximation, some conditions impose themselves~:  first, $nL^\sigma \rightarrow \infty$, and second,  $nL^{2 \sigma} \rightarrow 0$.
We decide to fix $L \sim n^{\frac{-1}{\sigma +1}}$. Thus, $n L^{\sigma} = n^{\frac{1}{\sigma +1}}$  and $nL^{2\sigma}  = n^{- \frac{\sigma-1}{\sigma+1}}$.
In this case, we complete the tail of the Gaussian approximation,
\begin{eqnarray}
\int_0^{n^{1/(\sigma+1)}} \hspace{-0.5cm} u^{-1/\sigma}  e^{ -u  + O \left( u^2 n^{-1}  \right)} \text{d}u \ & \ \sim \ & \ \int_0^{n^{1/(\sigma+1)}} \hspace{-0.5cm} u^{-1/\sigma}  e^{-u} \text{d}u + O\left(n^{-1}\right) \ \\
&\  \sim \ &\  \int_0^{\infty} u^{-1/\sigma}  e^{-u} \text{d}u + O\left(n^{-1}\right)\  \\
& \ \sim\  & \ \rmgamma \left(\frac{\sigma -1}{\sigma}\right) + O\left(n^{-1}\right) \, .
\end{eqnarray}
To conclude with this illustration example, we need to deal with the third part of the contour, which is the circle part.
We show that this part is exponentially small. Indeed, it is possible to stretch the segments ${\mathcal{C}_1}$
and ${\mathcal{C}_2}$. Thus, the radius of the circle part will grow to infinity and this integral part will be negligible.
Besides, it is effectively possible to stretch the two segments because the function $h_1$ is strictly decreasing along the segments. Its maximum is at the saddle-point.
On $[0, \, n^\sigma]$, we have $h_1'(t) = -\sigma t^{\sigma -1} (1 + t^\sigma)^2  < 0$. 
So, the evaluation of the integral on the segment $[0, \, n^\sigma]$ is the same order as the evaluation on  $[0, \, n^{1/(\sigma+1)}]$. Indeed, the majoration error is exponentially small. This error is about the order $O \left( \exp (- n^{1/(\sigma +1)}) \right)$, which is negligible.

\noindent Gathering all parts, we obtain
\begin{eqnarray}
 y_n &=& \frac{\sigma^{n+1}}{\pi} \sin\left(\frac{\pi}{\sigma}\right)  \frac{n^{-\frac{\sigma -1}{\sigma}}}{\sigma} \rmgamma \left( \frac{\sigma -1}{\sigma}\right)  \left( 1 + O\left( n^{-1}\right)\right)\\
 &=& \frac{\sigma^n \, n^{1-1/\sigma}}{\rmgamma(1/\sigma)} \left( 1 + O\left( n^{-1}\right)\right)\, .
\end{eqnarray}

\end{document}